\title[{$\mathrm{G}_2$-structures and the deformed Shatashvili--Vafa vertex algebra}]{$\mathrm{G}_2$-structures with torsion and the deformed Shatashvili--Vafa vertex algebra}
\author[A. De Arriba de La Hera]{Andoni De Arriba de La Hera}
\address{Universidad Nacional de Educaci\'on a Distancia (UNED)\\ C. de Juan del Rosal, 10, Moncloa-Aravaca,\\ 28040 Madrid, Spain}
\email{andoni.dearriba@mat.uned.es}
\author[M. Galdeano]{Mateo Galdeano}
\address{Fachbereich Mathematik, Universit\"at Hamburg\\ Bundesstrasse 55\\ 20146, Hamburg, Germany}
\email{mateo.galdeano@uni-hamburg.de}
\author[M. Garcia-Fernandez]{Mario Garcia-Fernandez}
\address{Instituto de Ciencias Matem\'aticas (CSIC-UAM-UC3M-UCM)\\ Nicol\'as Cabrera 13--15, Cantoblanco\\ 28049 Madrid, Spain}
\email{mario.garcia@icmat.es}
\thanks{
Partially supported by the Spanish Ministry of Science and Innovation, through the `Severo Ochoa Programme for Centres of Excellence in R\&D' (CEX2023-001347-S). The first author's work is funded by the Deutsche Forschungsgemeinschaft (DFG, German Research Foundation) under Germany's Excellence Strategy, EXC 2121 ``Quantum Universe,'' 390833306. The second and third authors are partially supported by MICINN under grants PID2019-109339GA-C32, PID2022-141387NB-C22 and CNS2022-135784. 
}
\theoremstyle{plain}
\newtheorem{theorem}{Theorem}[section]
\newtheorem{lemma}[theorem]{Lemma}
\newtheorem{proposition}[theorem]{Proposition}
\newtheorem{conjecture}[theorem]{Conjecture}
\newtheorem*{theorem*}{Theorem}
\theoremstyle{definition}
\newtheorem{definition}[theorem]{Definition}
\newtheorem{definition-theorem}[theorem]{Definition-Theorem}
\newtheorem{example}[theorem]{Example}
\newtheorem*{acknowledgements}{Acknowledgements}
\theoremstyle{remark}
\newtheorem{remark}[theorem]{Remark}
\numberwithin{equation}{section} \setcounter{tocdepth}{1}
\newcommand{\dd}{{\rm d}}
\newcommand{\SV}{\mathrm{SV}}
\newcommand{\normord}[1]{:\mathrel{#1}:} 
\begin{document}

\begin{abstract}
We construct representations of the deformed Shatashvili--Vafa vertex algebra $\SV_a$, with parameter $a \in \mathbb{C}$, as recently proposed in the physics literature by Fiset and Gaberdiel. The geometric input for our construction are integrable $\mathrm{G}_2$-structures with closed torsion, solving the heterotic $\mathrm{G}_2$ system with $\alpha'=0$ on the group manifolds $S^3\times T^4$ and $S^3\times S^3\times S^1$. From considerations in string theory, one expects the chiral algebra of these backgrounds to include $\SV_a$, and we provide a mathematical realization of this expectation by obtaining embeddings of $\SV_a$ in the corresponding superaffine vertex algebra and the chiral de Rham complex. In our examples, the parameter $a$ is proportional to the scalar torsion class of the $\mathrm{G}_2$ structure, $a \sim \tau_0$, as expected from previous work in the semi-classical limit by the second author, jointly with de la Ossa and Marchetto.
\end{abstract}

\maketitle

\setlength{\parskip}{5pt}
\setlength{\parindent}{0pt}


\section{Introduction}\label{sec:intro}

A supersymmetric background of string theory is described by a superconformal field theory on the worldsheet, and the underlying superconformal algebra of chiral symmetries is of special interest. Mathematically, this chiral algebra is identified with a supersymmetric (SUSY) vertex algebra, and the correspondence with the background geometry can be made precise by providing an embedding of the vertex algebra in the chiral de Rham complex.

The chiral de Rham complex is a sheaf of vertex algebras on any smooth manifold, introduced by Malikov, Schechtman and Vaintrob \cite{Malikov:1998dw}, which provides a formal quantization of the non-linear sigma model \cite{Ekstrand:2009zd}. Vertex algebra embeddings have been previously studied for special holonomy manifolds, see for example \cite{Ekstrand:2010wu,Heluani:2017juq,RodriguezDiaz:2016tih}.

Embeddings of the $\mathcal{N}=2$ vertex algebra for compact non-K\"ahler complex manifolds have been recently obtained, first in the homogeneous setting \cite{Alvarez-Consul:2020hbl} and later in more generality from solutions of the Hull-Strominger system \cite{Alvarez-Consul:2023zon}. The key assumption in these results is that the underlying manifold admits a solution to the \emph{Killing spinor equations} of heterotic supergravity. In particular, these geometries describe heterotic supersymmetric backgrounds to leading order in the $\alpha'$-expansion.

Our main motivation in this note is to study the analogous problem for a seven-dimensional Riemannian manifold $M$. Suppose $M$ admits a solution to the Killing spinor equations with parameter $\lambda\in\mathbb{R}$ and closed NS flux $H$ \cite{Silva:2024fvl}:
\begin{equation}
\label{eq:Killingspinoreqs}
    \nabla^+\eta=0 \, , \qquad \left( \slashed{\nabla}^{1/3}-\frac{1}{2}\zeta \right) \cdot \eta = \lambda\, \eta \, , \qquad \dd H=0 \, ,
\end{equation}
for a real spinor $\eta$, a three-form $H\in\Omega^3$, and a one-form $\zeta\in\Omega^1$. Here, $\nabla^+$ and $\slashed{\nabla}^{1/3}$ are the spin connection and Dirac operator of the connections with skew torsion $H$ and $\frac{1}{3} H$, respectively. The equations \eqref{eq:Killingspinoreqs} are equivalent \cite{Silva:2024fvl} to the \emph{heterotic $\mathrm{G}_2$ system} with $\alpha'=0$ \cite{Clarke:2016qtg,delaOssa:2017pqy}. Consequently, there exists an integrable $\mathrm{G}_2$-structure on $M$ with torsion three-form $H$ (see Proposition \ref{prop:uniqueG2connection}) satisfying
\begin{equation*}
    \dd\varphi = \frac{12}{7}\lambda\, \psi + 3 \, \tau_1\wedge\varphi + *\tau_3 \, , \qquad
    \dd\psi  = 4 \, \tau_1\wedge\psi \, , \qquad \dd H=0 \, ,
    \end{equation*}
where the precise definition of the different torsion classes $\tau_j$ can be found in \Cref{sec:G2structures}. Note that, in the present setup, a solution to the first and third equations in \eqref{eq:Killingspinoreqs} determines uniquely a solution of the full system, with the prescription 
$$
\tau_0=\frac{12}{7}\lambda, \qquad \tau_1=\frac{1}{4}\zeta.
$$
When $\tau_1$ is exact, the system describes $\mathcal{N}=1$ backgrounds of the form $\mathrm{AdS}_3\times M$ when $\tau_0\neq0$, and $\mathbb{R}^{1,2}\times M$ when $\tau_0=0$. Remarkably, the eigenvalue $\lambda$ of the Dirac spinor $\eta$ is tied to the scalar torsion class $\tau_0=\frac{12}{7}\lambda$, which is in turn related to the cosmological constant of the spacetime.

In the case of a $\mathrm{G}_2$-holonomy manifold, all torsion classes vanish and the spacetime degenerates to flat Minkowski space. In this situation, the underlying SUSY vertex algebra was described by Shatashvili--Vafa \cite{Shatashvili:1994zw}, and its embedding in the chiral de Rham complex was found by Rodr\'iguez D\'iaz \cite{RodriguezDiaz:2016tih}.

Fiset--Gaberdiel \cite{Fiset:2021azq} proposed a one-parameter family of algebras, that we call \emph{deformed Shatashvili--Vafa} and denote by $\SV_a$, which depend on a parameter $a \in \mathbb{C}$ and are suitable for describing $\mathrm{AdS}_3$ backgrounds. These algebras belong to the two-parameter family of algebras $\mathcal{SW}(\frac{3}{2},\frac{3}{2},2)$ first constructed by Blumenhagen \cite{Blumenhagen:1991nm} (see Remark \ref{rem:SW} for a more precise statement). By studying the semiclassical limit---where the superconformal field theory induces a Poisson Vertex algebra---it was shown in \cite{delaOssa:2024cgo} that the parameter of $\SV_a$ is tied to the scalar torsion class, $a=i\frac{7}{6}\tau_0\ell_s$, where $\ell_s$ is the string length.

Gathering all the evidence above, we propose the following conjecture:

\begin{conjecture}
    Let $M$ be a seven-dimensional Riemannian manifold admitting a solution to \eqref{eq:Killingspinoreqs}. Then, its chiral de Rham complex admits an embedding of the SUSY vertex algebra $\SV_a$, where the value of $a$ is determined by the eigenvalue $\lambda$ of the Dirac spinor $\eta$.
\end{conjecture}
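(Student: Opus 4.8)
The plan is to establish the conjecture in the homogeneous situations where $M$ is a Lie group manifold---concretely $S^3\times T^4$ and $S^3\times S^3\times S^1$---for which the chiral de Rham complex is globally modelled by the \emph{superaffine vertex algebra} built from the current algebra of the group together with its free fermions. On such backgrounds the heterotic $\mathrm{G}_2$ system admits explicit left-invariant solutions, so by Proposition~\ref{prop:uniqueG2connection} we obtain an integrable $\mathrm{G}_2$-structure with closed torsion three-form $H$ whose torsion classes, and in particular the scalar class $\tau_0=\tfrac{12}{7}\lambda$, are constants computable from the structure constants of the group. The strategy is then to exhibit explicit generating fields of $\SV_a$ inside this vertex algebra and to check, by a finite OPE computation, that they satisfy precisely the deformed relations of Fiset--Gaberdiel with $a$ proportional to $\tau_0$.

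First I would recall the undeformed embedding of the Shatashvili--Vafa algebra due to Rodr\'iguez D\'iaz, in which the defining $\mathrm{G}_2$ forms $\varphi\in\Omega^3$ and $\psi=*\varphi\in\Omega^4$ are lifted, through the standard map from differential forms to sections of the chiral de Rham complex, to supersymmetric fields $\Phi$ and $K$ of conformal weight $\tfrac{3}{2}$ and $2$. Together with the $\mathcal{N}=1$ superconformal pair $(T,G)$ produced by the Riemannian metric, these are the candidate generators; the point is that the $\mathrm{G}_2$-invariance of $\varphi$ and $\psi$ guarantees that the subalgebra they generate is of type $\mathcal{SW}(\tfrac{3}{2},\tfrac{3}{2},2)$. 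The deformation then enters through the torsion: the field $G$ built from the metric must be replaced by the supercurrent of the \emph{torsionful} connection $\nabla^+$, and the action of the chiral differential on $\Phi$ no longer closes on itself but produces the term dictated by $\dd\varphi=\tfrac{12}{7}\lambda\,\psi+3\,\tau_1\wedge\varphi+*\tau_3$. It is exactly the $\tau_0\,\psi$ summand that feeds into the OPE of the weight-$\tfrac{3}{2}$ fields and generates the deformation parameter $a$.

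The core of the argument is the OPE verification. Concretely I would compute the singular parts of all pairwise products among $T$, $G$, $\Phi$, $K$ and their superpartners, organising the calculation by conformal weight so that only finitely many structure coefficients must be matched against the Fiset--Gaberdiel presentation. Associativity (equivalently the vertex-algebra Jacobi identity) is not automatic once the deformation is switched on, and this is where I expect the main obstacle to lie: the would-be relations close into $\SV_a$ only when the torsion three-form is closed, $\dd H=0$, and the $\mathrm{G}_2$-structure is integrable, since these are precisely the conditions that cancel the anomalous higher-order poles in the $G$--$\Phi$ and $\Phi$--$\Phi$ products. Tracking how $\dd H$ and the differential identities for $\dd\varphi$ and $\dd\psi$ enter these poles, and showing they assemble into the single surviving constant $a=i\tfrac{7}{6}\tau_0\ell_s$ predicted by the semiclassical analysis of \cite{delaOssa:2024cgo}, is the technical heart of the proof.

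Finally, to upgrade the algebraic embedding into the superaffine vertex algebra to an embedding into the chiral de Rham complex itself, I would invoke the identification of the superaffine model with the global sections of the chiral de Rham complex on a parallelizable group manifold, under which the left-invariant fields $T,G,\Phi,K$ correspond to globally defined sections. For the general, non-homogeneous conjecture one cannot rely on this explicit model, and the remaining difficulty is to make sense of the same construction locally and glue it across charts; I would approach that by re-deriving the generators directly from $\varphi$, $\psi$ and the torsion data through the sheaf-theoretic differential of the chiral de Rham complex, so that the OPE identities---being pointwise consequences of the $\mathrm{G}_2$-structure equations---continue to hold on overlaps.
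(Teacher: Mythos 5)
The statement you are proving is labelled a \emph{conjecture} in the paper, and the paper itself does not prove it in the stated generality: its actual contribution is Theorem~\ref{thm:finalresult}, which establishes the embedding only for five explicit families of integrable $\mathrm{G}_2$-structures on the group manifolds $S^3\times T^4$ and $S^3\times S^3\times S^1$. For those cases your strategy coincides with the paper's: work in the superaffine vertex algebra $V^k(\mathfrak{g}_{\text{super}})$ attached to the invariant sections of a left-equivariant exact Courant algebroid, build the weight-$\tfrac{3}{2}$ generator by contracting $\varphi$ with the odd generators $e^i$, verify the $\lambda$-brackets of $\SV_a$ by a finite (computer-assisted, in the paper) OPE computation, and transport the embedding into $\Gamma(M,\Omega^{\mathrm{ch}}_M(E))$ at level $k=2$ via Proposition~\ref{prop:superaffineembed}. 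Your identification of the deformation parameter with the scalar torsion class also matches the paper's $a=-\tfrac{1}{\sqrt{k}}\tfrac{7}{6}\tau_0$, and your observation that $G$ acquires a correction from $\tau_1$ agrees with Remark~\ref{rem:tau1enG}.

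Two points need correction, and one gap remains. First, the field lifted from $\psi=*\varphi$ is the weight-$2$ generator $X$ (in the paper, $X=\tfrac{1}{6}\Phi_{(0)}\Phi$), not $K$; the field $K=S\Phi$ is the odd superpartner of $\Phi$ and is not the naive quantization of the coassociative form. Second, $\mathrm{G}_2$-invariance of $\varphi$ and $\psi$ does \emph{not} by itself guarantee closure onto an $\mathcal{SW}(\tfrac{3}{2},\tfrac{3}{2},2)$-algebra of the $\SV_a$ type: one must additionally verify the null-vector relation \eqref{eq:singularvector}, which is an extra identity among normally ordered products that the paper checks explicitly for its candidates and which cannot be waved through on representation-theoretic grounds. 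Finally, your last paragraph correctly identifies that the general (non-homogeneous) case requires re-deriving the generators sheaf-theoretically and gluing across charts, but this is precisely the part that is not carried out --- neither by you nor by the paper --- so the argument as written establishes only the special cases of Theorem~\ref{thm:finalresult} and leaves the conjecture itself open.
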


Similarly as in \cite{Alvarez-Consul:2023zon}, we also expect an analogous result to hold for the more general situation where a gauge bundle equipped with a $\mathrm{G}_2$-instanton solving the heterotic Bianchi identity is incorporated. In this case, one has a Killing spinor on a string algebroid describing a solution of the heterotic $\mathrm{G}_2$ system with $\alpha'\neq 0$, and the chiral de Rham complex should be the one associated to this algebroid.

In this note, we provide further evidence for the conjecture by showing that it holds for five different families of $\mathrm{G}_2$-structures, defined over two different homogeneous manifolds following \cite{Alvarez-Consul:2020hbl,Fino:2023vdp}. Our main result, including the relationship between $a$ and $\tau_0$, is presented in \Cref{thm:finalresult} below.

It is worth pointing out that the torsion class $\tau_3$ does not play a role in our construction. On the other hand, the torsion class $\tau_1$, which is always closed in the examples we consider, appears in an analogous fashion to that of the Lee form in the $\mathcal{N}=2$ case studied in \cite{Alvarez-Consul:2020hbl, Alvarez-Consul:2023zon}, see \Cref{rem:tau1enG}. Similarly to the results of \cite{Alvarez-Consul:2020hbl, Alvarez-Consul:2023zon}, we also expect that a dilaton-corrected version of \Cref{thm:finalresult} should hold.

\begin{acknowledgements}
We would like to thank Andrew Linshaw for help with the program \emph{OPEdefs}, and Jethro van Ekeren for helpful discussions. The first author is grateful to the Department of Mathematics of the University of Denver for the hospitality. The second and third authors would like to thank the mathematical research institute MATRIX in Australia for hospitality and support during the program \emph{The Geometry of Moduli Spaces in String Theory}, where some of the ideas of this note were born. We would specially like to thank the organizers of the program for providing a stimulating environment for discussions.
\end{acknowledgements}

\section{Integrable \texorpdfstring{$\mathrm{G}_2$}{G2}-structures and their torsion classes}
\label{sec:G2structures}

\subsection{Torsion of a \texorpdfstring{$\mathrm{G}_2$}{G2}-structure}

We begin with a brief summary of $\mathrm{G}_2$-structures and introduce the properties which will be relevant to us. For more detailed accounts on the topic and proofs of the statements presented, we refer the reader to \cite{Bryant:2005mz, Karigiannis:2020}.

\begin{definition}
    Let $M$ be a seven-dimensional manifold. A \emph{$\mathrm{G}_2$-structure} on $M$ is a nowhere-vanishing three-form $\varphi$ on $M$ such that for every $p\in M$ there exists a basis $\lbrace v^1,\dots,v^7\rbrace$ of $T_p^*M$ for which
\begin{equation*}
\label{eq:varphi0}
\varphi_0=v^{123}-v^{145}-v^{167}-v^{246}+v^{257}-v^{347}-v^{356} \, ,
\end{equation*}
where we are writing $v^{ij}=v^i\wedge v^j \, $. We call $\varphi$ the \emph{associative form}.
\end{definition}

A $\mathrm{G}_2$-structure determines an orientation and a metric $g$ on $M$, given by
    \begin{equation*}
        g(X,Y)\,\dd vol=\frac{1}{6}\left(X\lrcorner\varphi\right)\wedge\left(Y\lrcorner\varphi\right)\wedge\varphi \, ,
    \end{equation*}
for any $X,Y\in TM$. We can use these to define the \emph{coassociative} form $\psi= * \varphi$.

Differential forms on $M$ decompose into irreducible $\mathrm{G}_2$-representations, inducing a decomposition of the spaces $\Omega^k=\Gamma\left(\Lambda^k(T^*M)\right)$. Two of these subspaces which will shortly play an important role are
\begin{equation*}
        \Omega^2_{14}=\lbrace \beta\in\Omega^2 \, \vert \, \beta\wedge\psi=0 \rbrace \, , \qquad \Omega^3_{27}=\lbrace \gamma\in\Omega^3 \, \vert \, \gamma\wedge\varphi=0 \, , \, \gamma\wedge\psi=0 \rbrace \, .
\end{equation*}
We can uniquely characterize the torsion of the $\mathrm{G}_2$-structure by decomposing the exterior derivatives of $\varphi$ and $\psi$ into irreducible $\mathrm{G}_2$-representations:

\begin{definition}
     Let $M$ be a seven-dimensional manifold and let $\varphi$ be a $\mathrm{G}_2$-structure on $M$. The \emph{torsion classes} of the $\mathrm{G}_2$-structure are the differential forms $\tau_0 \in \Omega^{0}$, $\tau_1 \in \Omega^{1}$, $\tau_2 \in \Omega^{2}_{14}$ and $\tau_3 \in \Omega^{3}_{27}$ uniquely determined by
     \begin{equation*}
    \dd\varphi = \tau_0\, \psi + 3 \, \tau_1\wedge\varphi + *\tau_3 \, , \qquad
    \dd\psi  = 4 \, \tau_1\wedge\psi + *\tau _2 \, .
    \end{equation*}
    We will say that the $\mathrm{G}_2$-structure is \emph{integrable} if $\tau_2=0$.
\end{definition}

\begin{proposition}[\cite{Friedrich:2001nh}]
\label{prop:uniqueG2connection}
    Let $M$ be a seven-dimensional manifold and let $\varphi$ be an integrable $\mathrm{G}_2$-structure on $M$. Then, there exists a unique connection with totally skew-symmetric torsion compatible with the $\mathrm{G}_2$-structure. Its torsion three-form $H$ is given by
    \begin{equation*}
    H =  \frac{1}{6}\, \tau_0\, \varphi - \tau_1\lrcorner\,\psi - \tau_3 \, .
    \end{equation*}
\end{proposition}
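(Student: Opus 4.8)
The plan is to present any candidate connection as a deformation $\nabla=\nabla^g+\tfrac12 H$ of the Levi-Civita connection $\nabla^g$ of $g$ by a totally skew torsion form $H\in\Omega^3$, and then to reduce the compatibility condition $\nabla\varphi=0$ to a pointwise, $\mathrm{G}_2$-equivariant linear equation for $H$. Since $\varphi$ determines $g$, any connection with $\nabla\varphi=0$ is automatically metric, and conversely every metric connection with totally skew-symmetric torsion is of the form above for a unique $H$; preserving $\varphi$ then also forces $\nabla\psi=0$. Writing the difference $A_X:=\nabla_X-\nabla^g_X$ as the $g$-skew endomorphism determined by $g(A_XY,Z)=\tfrac12 H(X,Y,Z)$ and letting it act as a derivation on $\varphi$, the equation $\nabla\varphi=0$ becomes
\[
(\nabla^g_X\varphi)(Y,Z,W)=\varphi(A_XY,Z,W)+\varphi(Y,A_XZ,W)+\varphi(Y,Z,A_XW),
\]
for all vector fields $X,Y,Z,W$. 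Thus the problem is to invert, pointwise, the linear map $H\mapsto A\cdot\varphi$ against the prescribed tensor $\nabla^g\varphi$.

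The second step is representation-theoretic. It is standard that $\nabla^g\varphi$ is equivalent to the full torsion tensor $T\in\Gamma(T^*M\otimes T^*M)$ via $\nabla^g_a\varphi_{bcd}=T_a{}^{e}\psi_{ebcd}$, and that under $\mathrm{G}_2\subset\mathrm{SO}(7)$ one has the decomposition $T^*M\otimes T^*M\cong 1\oplus 7\oplus 14\oplus 27$, the four summands carrying precisely $\tau_0,\tau_1,\tau_2,\tau_3$. On the source side, $\Lambda^3 T^*M\cong 1\oplus 7\oplus 27$, with the three summands spanned by $\varphi$, by the forms $X\lrcorner\psi$, and by $\Omega^3_{27}$; crucially, $\Lambda^3 T^*M$ contains \emph{no} copy of the $14$-dimensional irreducible representation. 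Because $H\mapsto A\cdot\varphi$ is $\mathrm{G}_2$-equivariant, Schur's lemma implies it acts as a scalar on each of the isotypic components $1,7,27$, and these scalars are nonzero, as one checks from the standard contraction identities for $\varphi$ and $\psi$.

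The conclusion then follows: $\nabla^g\varphi$ lies in the image of $H\mapsto A\cdot\varphi$ if and only if its $14$-component vanishes, that is, if and only if $\tau_2=0$, which is exactly the integrability hypothesis. In that case the three nonzero scalars determine a unique $H\in 1\oplus 7\oplus 27$ solving the equation, giving both existence and uniqueness. The explicit formula $H=\tfrac16\tau_0\varphi-\tau_1\lrcorner\psi-\tau_3$ is obtained by matching the three proportionality constants; I expect this last bookkeeping to be the main obstacle, as it requires the delicate $\mathrm{G}_2$-contraction identities relating $\varphi$, $\psi$ and $g$, together with a careful translation between the intrinsic-torsion encoding of $\nabla^g\varphi$ and its exterior-derivative encoding through the decompositions of $\dd\varphi$ and $\dd\psi$. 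Alternatively, one could shortcut existence by verifying directly that the closed-form $H$ above satisfies $\nabla^g\varphi+\tfrac12 H\cdot\varphi=0$, which again reduces to these contraction identities.
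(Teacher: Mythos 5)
The paper offers no proof of this proposition: it is quoted verbatim from Friedrich--Ivanov \cite{Friedrich:2001nh}, so there is no internal argument to compare against. Your sketch is, in outline, exactly the standard argument behind that reference. The reduction to $\nabla=\nabla^g+\tfrac12 H$ is handled correctly, including the slightly delicate point that $\nabla\varphi=0$ forces $\nabla g=0$ because $g$ is an algebraic, equivariant function of $\varphi$; the identification of $\nabla^g\varphi$ with the full torsion tensor in $T^*M\otimes T^*M\cong 1\oplus 7\oplus 14\oplus 27$ and the observation that $\Lambda^3T^*M\cong 1\oplus 7\oplus 27$ contains no copy of $14$ is the right mechanism, and since each of $1,7,27$ occurs with multiplicity one on both sides, Schur's lemma does give you three scalars whose nonvanishing is equivalent to existence-and-uniqueness on the integrable locus $\tau_2=0$. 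Friedrich--Ivanov phrase the same conclusion in exterior-derivative variables, characterizing integrability by $\dd{*}\varphi=\theta\wedge{*}\varphi$ and writing the torsion as $-{*}\dd\varphi+\tfrac16(\dd\varphi,{*}\varphi)\,\varphi+{*}(\theta\wedge\varphi)$, which is the displayed formula after translating $(\tau_0,\tau_1,\tau_3)$ into $(\dd\varphi,\dd\psi)$.

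The one substantive incompleteness --- which you flag yourself --- is that the quantitative half of the statement is never established: you assert rather than verify that the three Schur constants are nonzero (this is what uniqueness actually rests on), and you do not carry out the contraction identities that produce the coefficients $\tfrac16$, $-1$, $-1$ in $H=\tfrac16\tau_0\varphi-\tau_1\lrcorner\psi-\tau_3$. Since the explicit formula is precisely what the paper uses downstream (it determines the closed three-form $H$ entering Proposition \ref{prop:G2structureS3T4} and the quadratic Lie algebra of Proposition \ref{prop:quadraticLiealgebra}), that bookkeeping is not optional; but it is a finite computation with the identities $\varphi_{abe}\varphi_{cde}=g_{ac}g_{bd}-g_{ad}g_{bc}+\psi_{abcd}$ and its contractions, and your suggested shortcut --- verifying directly that the stated $H$ satisfies $\nabla^g\varphi+\tfrac12 A\cdot\varphi=0$ --- is a legitimate way to close it. So: right approach, sound skeleton, constants left unchecked.
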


\subsection{\texorpdfstring{$\mathrm{G}_2$}{G2}-structures via products}

A simple way of constructing a $\mathrm{G}_2$-structure is via $\mathrm{G}$-structures in lower-dimensional manifolds. To this end, we introduce:

\begin{definition}
    Let $X$ be a four-dimensional manifold. An \emph{$\mathrm{SU}(2)$-structure} on $X$ is a triple $\lbrace\omega_i\rbrace_{i=1,2,3}$ of nowhere-vanishing two-forms on $X$ satisfying
\begin{equation*}
\label{eq:SU2relations}
\omega_i \wedge\omega_j = 0 \,, \qquad \frac{1}{2}\, \omega_i\wedge\omega_i = \frac{1}{2}\, \omega_j\wedge\omega_j \neq 0 \, ,
\end{equation*}
for each $i,j\in\lbrace1,2,3\rbrace$, $i\neq j$.
\end{definition}

\begin{definition}
    Let $N$ be a six-dimensional manifold. An \emph{$\mathrm{SU}(3)$-structure} on $N$ is a pair of well defined, nowhere-vanishing forms $(\omega,\Omega)$, where $\omega$ is a real two-form and $\Omega$ is a complex three-form, satisfying the following relations:
\begin{equation*} 
\omega\wedge\Omega = 0\, , \qquad \frac{1}{6}\, \omega\wedge\omega\wedge\omega = \frac{i}{8}
\, \Omega\wedge\overline\Omega \neq 0\, .
\end{equation*} 
We will denote $\Omega_+\coloneqq\mathrm{Re}(\Omega)$ and $\Omega_-\coloneqq\mathrm{Im}(\Omega)$.
\end{definition}

The following lemmas describe how to obtain $\mathrm{G}_2$-structures from $\mathrm{SU}(2)$- and $\mathrm{SU}(3)$-structures.

\begin{lemma}
\label{lem:G2fromSU2}
    Let $X$ be a four-dimensional manifold with $\mathrm{SU}(2)$-structure $\lbrace\omega_i\rbrace_{i=1,2,3}$. Let $Y$ be a three-dimensional manifold and let $\eta^1,\eta^2,\eta^3$ be a global frame of $T^*Y$. Then, the three-form
    \begin{equation*}
    \label{eq:G2formfromSU2}
        \varphi=\eta^1\wedge\eta^2\wedge\eta^3 +\eta^1\wedge\omega_1+\eta^2\wedge\omega_2+\eta^3\wedge\omega_3
    \end{equation*}
    defines a $\mathrm{G}_2$-structure on the product manifold $Y\times X$.
\end{lemma}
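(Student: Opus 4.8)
The plan is to verify directly the pointwise condition in the definition of a $\mathrm{G}_2$-structure. First I would note that $\varphi$ is a globally well-defined, smooth three-form on $Y\times X$, being a sum of wedge products of the pullbacks of the globally defined forms $\eta^a$ (a coframe of $T^*Y$ by hypothesis) and $\omega_i$ (the $\mathrm{SU}(2)$-structure on $X$). It then suffices to show that at each point $(y,x)$ there is a basis of $T^*_{(y,x)}(Y\times X)$ in which $\varphi$ equals the model form $\varphi_0$; nowhere-vanishing follows automatically, since $\varphi_0\neq 0$. Everything thus reduces to a linear-algebra statement at a fixed point.

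At such a point I would set $v^1=\eta^1,\ v^2=\eta^2,\ v^3=\eta^3$, so that the first summand of $\varphi$ becomes $v^{123}$, matching the first term of $\varphi_0$. Reorganizing $\varphi_0$ according to which index among $\{1,2,3\}$ occurs,
\[
\varphi_0 = v^{123} + v^1\wedge(-v^{45}-v^{67}) + v^2\wedge(-v^{46}+v^{57}) + v^3\wedge(-v^{47}-v^{56}),
\]
one sees that it is enough to produce a basis $\{v^4,v^5,v^6,v^7\}$ of $T^*_xX$ for which
\[
\omega_1 = -(v^{45}+v^{67}),\qquad \omega_2=-(v^{46}-v^{57}),\qquad \omega_3=-(v^{47}+v^{56}).
\]
A direct check shows that this model triple satisfies the defining $\mathrm{SU}(2)$ relations, so the real content is the converse: the relations force the existence of such a basis.

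The heart of the argument is the normal form for an $\mathrm{SU}(2)$-structure on a four-dimensional vector space. I would set $\mu\coloneqq\tfrac12\,\omega_1\wedge\omega_1\in\Lambda^4 T^*_xX\setminus\{0\}$ and use it to define the symmetric pairing $\langle\cdot,\cdot\rangle$ on $\Lambda^2 T^*_xX$ by $\alpha\wedge\beta=\langle\alpha,\beta\rangle\,\mu$; this pairing is nondegenerate of signature $(3,3)$. The $\mathrm{SU}(2)$ relations say precisely that $\langle\omega_i,\omega_j\rangle=\delta_{ij}$, so $W\coloneqq\mathrm{span}\{\omega_1,\omega_2,\omega_3\}$ is a maximal positive-definite subspace, i.e.\ $W=\Lambda^2_+$ for a suitable inner product $g$ on $T^*_xX$ with volume form $\mu$. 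Choosing a positively oriented $g$-orthonormal coframe $\{v^4,\dots,v^7\}$ realizes $\Lambda^2_+$ as the span of the standard self-dual two-forms, and a rotation in $\mathrm{SO}(3)=\mathrm{SO}(\Lambda^2_+)$ of the coframe moves the orthonormal triple $\{\omega_i\}$ into standard position.

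The step I expect to be most delicate is the sign and orientation bookkeeping required to land on $\varphi_0$ exactly rather than on some other representative of its $\mathrm{G}_2$-orbit. The obstruction is that $-\mathrm{Id}$ on $\Lambda^2_+$ does not belong to $\mathrm{SO}(3)$, so the overall signs of the target triple cannot be absorbed by an orientation-preserving orthonormal change of $\{v^4,\dots,v^7\}$ alone; moreover the ordered triple $\{\omega_i\}$ may be negatively oriented as a basis of $\Lambda^2_+$. These issues are resolved by exploiting the full freedom available in a basis of $T^*_{(y,x)}(Y\times X)$: an orientation-reversing reflection on $X$ (for instance $v^7\mapsto -v^7$), which exchanges self-dual and anti-self-dual forms, combined with an even number of sign changes among the $\eta^a$ (which preserves $v^{123}$ but flips the corresponding coupling signs), allows one to match $\omega_1,\omega_2,\omega_3$ to the three required targets simultaneously. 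Carrying out this bookkeeping yields a basis in which $\varphi=\varphi_0$, completing the proof.
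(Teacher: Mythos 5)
The paper states this lemma without proof (it is quoted as standard, with \cite{Bryant:2005mz,Karigiannis:2020} cited for background), so your argument can only be judged on its own merits. Its overall strategy is the natural one: reduce to a pointwise linear-algebra statement, regroup $\varphi_0$ as $v^{123}+\sum_a v^a\wedge\sigma_a$ with $\sigma_1=-(v^{45}+v^{67})$, $\sigma_2=-(v^{46}-v^{57})$, $\sigma_3=-(v^{47}+v^{56})$, and use the wedge pairing on $\Lambda^2T_x^*X$ normalized by $\mu=\tfrac12\omega_1\wedge\omega_1$ to produce a conformal class with $\mathrm{span}\{\omega_i\}=\Lambda^2_+$, reducing everything to matching two orthonormal triples in $\Lambda^2_+$. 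Up to that point the argument is correct, including the surjectivity of $\mathrm{SO}(4)\to\mathrm{SO}(\Lambda^2_+)\cong\mathrm{SO}(3)$.

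The last step, however, is not a resolvable bookkeeping issue but a genuine obstruction, and the moves you propose cannot remove it. The triples $\left(-(v^{45}+v^{67}),\,-(v^{46}-v^{57}),\,-(v^{47}+v^{56})\right)$ obtainable from bases $\{v^4,\dots,v^7\}$ of $T_x^*X$ form exactly \emph{one} of the two $\mathrm{SO}(3)$-orbits of orthonormal bases of the positive-definite subspace for the wedge pairing: a coframe with $v^{4567}$ a negative multiple of $\mu$ produces forms spanning $\Lambda^2_-$, which can never equal the $\omega_i$ (so the reflection $v^7\mapsto -v^7$ is counterproductive rather than helpful), while positively oriented coframes reach only one orientation class of triples. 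Likewise, even sign changes or any $\mathrm{SO}(3)$ rotation of the $\eta^a$ act on the required target triple through $\mathrm{SO}(3)$ and preserve its orientation class; and non-orthogonal or non-block-diagonal changes of basis do not help either. Indeed, since $-\mathrm{Id}\notin\mathrm{SO}(3)$, the two orientation classes of orthonormal triples in $\Lambda^2_+$ are inequivalent under all of $\mathrm{GL}(4,\mathbb{R})$, and the paper's definition of an $\mathrm{SU}(2)$-structure does not distinguish between them. For a triple in the wrong class (e.g. $\omega_a=+(v^{45}+v^{67})$, etc.) one checks directly that $b_\varphi(X,Y)=\tfrac16(X\lrcorner\varphi)\wedge(Y\lrcorner\varphi)\wedge\varphi$ takes opposite signs on vectors tangent to $Y$ and on vectors tangent to $X$, so $b_\varphi$ has signature $(3,4)$ and $\varphi$ lies in the open $\mathrm{GL}(7,\mathbb{R})$-orbit of the \emph{split} form $\tilde{\mathrm{G}}_2$; no choice of basis can bring it to $\varphi_0$. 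The lemma therefore requires the standard, implicitly assumed orientation convention on $\{\omega_i\}$ (equivalently, that the associated almost complex structures satisfy $I_1I_2=I_3$ rather than $I_1I_2=-I_3$); one should either build this into the definition of $\mathrm{SU}(2)$-structure or verify it case by case --- all the $\mathrm{SU}(2)$-structures constructed in \Cref{sec:manifolds} do satisfy it.
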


\begin{lemma}
\label{lem:G2fromSU3}
    Let $N$ be a six-dimensional manifold with $\mathrm{SU}(3)$-structure $(\omega,\Omega)$. Let $\eta$ be a nowhere-vanishing one-form on the circle $S^1$. Then, the three-form
    \begin{equation*}
    \label{eq:G2formfromSU3}
        \varphi=\eta\wedge\omega+\Omega^+
    \end{equation*}
    defines a $\mathrm{G}_2$-structure on the product manifold $N\times S^1$.
\end{lemma}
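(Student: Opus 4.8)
The plan is to reduce the statement to a pointwise linear-algebra identity. Being a $\mathrm{G}_2$-structure is an open, $\GL$-invariant and purely pointwise condition, so it suffices to exhibit, at each $p\in N\times S^1$, a coframe of $T_p^*(N\times S^1)$ in which $\varphi$ coincides with the model form $\varphi_0$. Equivalently, I must show that $\varphi_p$ lies in the $\GL^+(7,\R)$-orbit of $\varphi_0$.

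First I would record the standard local normal form of an $\mathrm{SU}(3)$-structure. The defining relations are precisely those characterizing a reduction of the frame bundle of $N$ to $\mathrm{SU}(3)$: the stable form $\Omega$ determines an almost complex structure $J$ for which $\Omega$ is of type $(3,0)$, the relation $\omega\wedge\Omega=0$ forces $\omega$ to be of type $(1,1)$, and the normalization $\tfrac16\,\omega\wedge\omega\wedge\omega=\tfrac{i}{8}\,\Omega\wedge\overline\Omega$ makes the induced Hermitian metric unimodular. Hence around each point there is a coframe $e^1,\dots,e^6$ of $T^*N$ with $\omega=e^{12}+e^{34}+e^{56}$ and $\Omega=(e^1+\imag e^2)\wedge(e^3+\imag e^4)\wedge(e^5+\imag e^6)$, so that $\Re\Omega=e^{135}-e^{146}-e^{236}-e^{245}$, where $e^{ijk}=e^i\wedge e^j\wedge e^k$.

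Next I would set $e^7\defeq\eta$, a coframe on the $S^1$ factor, so that $e^1,\dots,e^7$ is a coframe on $N\times S^1$, and compute
\begin{equation*}
\varphi=\eta\wedge\omega+\Re\Omega=e^{127}+e^{347}+e^{567}+e^{135}-e^{146}-e^{236}-e^{245}.
\end{equation*}
It then remains to match this with $\varphi_0$. Both three-forms realize the same incidence pattern — the seven lines of the Fano plane underlying octonion multiplication — here matched by the cyclic relabeling $e^{i}\leftrightarrow v^{i+1}$ of indices modulo $7$. Choosing orientations appropriately, for instance $v^1=-\eta$, $v^2=-e^1$, and $v^j=e^{j-1}$ for $3\le j\le 7$, turns the displayed expression into $\varphi_0$. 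This is the same computation as in \Cref{lem:G2fromSU2}, with the product $S^1$-direction now playing the distinguished role.

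The only genuinely geometric input is the first step: that the bare algebraic relations in the definition do cut out the $\GL(6,\R)$-orbit of the standard model with \emph{positive-definite} induced metric (the compact real form of $\mathrm{SU}(3)$, rather than a split form), which is the content of Hitchin's theory of stable forms. Granting this normal form, the remainder is a finite, sign-sensitive linear-algebra check, and I expect that keeping careful track of the sign conventions for $\Re\Omega$ and for $\varphi_0$ will be the only place requiring attention.
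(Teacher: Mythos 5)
Your overall strategy---reduce to a pointwise statement, put the $\mathrm{SU}(3)$-structure in a normal form, adjoin $e^7=\eta$, and exhibit an explicit change of coframe carrying $\eta\wedge\omega+\Omega_+$ to $\varphi_0$---is the standard argument, and since the paper states this lemma without proof (deferring to its references), this is exactly the computation one is expected to supply. I have checked your coframe: with $v^1=-\eta$, $v^2=-e^1$ and $v^j=e^{j-1}$ for $3\le j\le 7$, the seven monomials of $\varphi_0$ reproduce $e^{127}+e^{347}+e^{567}+e^{135}-e^{146}-e^{236}-e^{245}$ term by term, so the sign-sensitive part of your argument is correct.

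The gap is in the step you yourself single out, and it is a real one. The algebraic relations recorded in the paper's definition of an $\mathrm{SU}(3)$-structure---$\omega\wedge\Omega=0$ and $\tfrac16\,\omega^3=\tfrac{i}{8}\,\Omega\wedge\overline{\Omega}\neq0$---do \emph{not} by themselves cut out the $\GL(6,\R)$-orbit of the standard model: nothing in them forces $\Omega$ to be decomposable (equivalently, to determine an almost complex structure for which it has type $(3,0)$), nor forces $\Omega_-$ to be the Hitchin dual of $\Omega_+$. Concretely, on $\R^6$ take $\omega=e^{12}+e^{34}+e^{56}$ and $\Omega=e^{135}-4i\,e^{246}$; one checks $\omega\wedge\Omega=0$ and $\tfrac{i}{8}\,\Omega\wedge\overline{\Omega}=e^{123456}=\tfrac16\,\omega^3$, yet $\varphi=\eta\wedge\omega+e^{135}$ is degenerate: $e_2\lrcorner\varphi=-e^{17}$, so $(e_2\lrcorner\varphi)\wedge(e_2\lrcorner\varphi)\wedge\varphi=0$ and the symmetric form $\tfrac16(X\lrcorner\varphi)\wedge(Y\lrcorner\varphi)\wedge\varphi$ is not definite, i.e.\ $\varphi$ is not a $\mathrm{G}_2$-structure. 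So your appeal to Hitchin's theory of stable forms needs the additional hypotheses that $\Omega_+$ is stable of the correct (negative-discriminant) type and that $\Omega_-=J^*\Omega_+$ for the induced $J$; these are part of the intended meaning of ``$\mathrm{SU}(3)$-structure'' as a genuine reduction of the frame bundle---which is what your first sentence asserts and what holds by inspection for the explicit pairs $(\omega,\Omega)$ used later in the paper---but they are not consequences of the two displayed identities. Once the definition is read as ``pointwise equivalent to the standard model,'' the rest of your proof is complete and correct.
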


\section{Examples of manifolds with \texorpdfstring{$\mathrm{G}_2$}{G2}-structures}
\label{sec:manifolds}

We now present the manifolds and $\mathrm{G}_2$-structures that constitute the main object of study in this note, mostly following \cite{Fino:2023vdp}.

\subsection{\texorpdfstring{$S^3\times T^4$}{S3xT4}}

We first consider the direct product of a three-dimensional sphere and four circles, $S^3\times T^4$, which we identify with the Lie group $\mathrm{SU}(2)\times\mathrm{U}(1)^4$. We choose a basis of the Lie algebra $\mathbb{R}^3\oplus\mathfrak{su}(2)\oplus\mathbb{R}$ such that its dual basis  $\lbrace v^1,\dots,v^7\rbrace$ satisfies the following structure equations
\begin{equation*}
    \dd v^4=v^{56}\,, \qquad \dd v^5=-v^{46}\,, \qquad \dd v^6=v^{45}\,, \qquad \dd v^1=\dd v^2=\dd v^3=\dd v^7=0\,.
\end{equation*}
This manifold can be regarded either as a $T^3$ bundle over the Hopf surface $S^3\times S^1$, or as an $S^3$ bundle over $T^4$. By defining $\mathrm{SU}(2)$ structures on $S^3\times S^1$ and $T^4$ and applying \Cref{lem:G2fromSU2}, we obtain two inequivalent families of integrable $\mathrm{G}_2$-structures over $S^3\times T^4$ with closed torsion three-form. Their torsion classes follow from a straightforward computation:

\begin{proposition}[\cite{Fino:2023vdp}]
\label{prop:G2structureS3T4}
    Consider the manifold $S^3\times T^4$ with a basis of its dual Lie algebra as above. Let $\ell,c_1,c_2,c_3,c_7>0$ be constant positive numbers. The forms
    \begin{align*}
        \omega_1&=\ell\,v^{45}+\sqrt{c_7\ell}\, v^{67} \, , & \omega_2&=\ell\,v^{46}-\sqrt{c_7\ell}\, v^{57} \, , &
        \omega_3&=-\sqrt{c_7\ell}\, v^{47}-\ell\,v^{56} \, , & 
    \end{align*}
    define an $\mathrm{SU}(2)$-structure on the Hopf surface $S^3\times S^1$. Let
    \begin{equation*}
        \eta^1=\sqrt{c_1}\, v^1 \,, \qquad \eta^2=\sqrt{c_2}\, v^2 \,, \qquad \eta^1=\sqrt{c_3}\, v^3 \,, 
    \end{equation*}
    be a global frame of $T^*T^3$, then 
    \begin{equation*}
        \varphi^1=\eta^1\wedge\eta^2\wedge\eta^3 +\eta^1\wedge\omega_1+\eta^2\wedge\omega_2+\eta^3\wedge\omega_3
    \end{equation*}
    determines an integrable $\mathrm{G}_2$-structure on $S^3\times T^4$ with metric
    \begin{equation}
    \label{eq:metricS3T4}
        g=\sum_{i=1}^3 c_i \, v^i\otimes v^i + \ell\sum_{j=4}^6 v^j\otimes v^j + c_7 \, v^7\otimes v^7 \, , 
    \end{equation}
    and torsion classes
    \begin{equation*}
        \tau_0=0 \, , \quad \tau_1=\frac{1}{4}\sqrt{\frac{c_7}{\ell}}v^7 \, , \quad \tau_3=\frac{1}{4}\left( -\eta^{12}\wedge v^4  + \eta^{13}\wedge v^5  + \eta^{23}\wedge v^6-3\ell v^{456} \right) \, .
    \end{equation*}
    The torsion three-form is closed and equal to
    \begin{equation}
    \label{eq:HS3T4}
        H=\ell \, v^{456} \, .
    \end{equation}
\end{proposition}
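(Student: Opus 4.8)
The plan is to establish the four assertions of the statement in turn: that the $\omega_i$ define an $\mathrm{SU}(2)$-structure, that $\varphi^1$ is an integrable $\mathrm{G}_2$-structure inducing the metric \eqref{eq:metricS3T4}, the explicit form of the torsion classes, and finally that $H$ is closed and equal to $\ell\, v^{456}$. The only conceptual inputs are \Cref{lem:G2fromSU2} and \Cref{prop:uniqueG2connection}; everything else reduces to exterior algebra in the global coframe $\lbrace v^1,\dots,v^7\rbrace$, where the only non-closed generators are $v^4,v^5,v^6$, with $\dd v^4=v^{56}$, $\dd v^5=-v^{46}$, $\dd v^6=v^{45}$.

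\emph{$\mathrm{SU}(2)$-structure, $\mathrm{G}_2$-structure and metric.} First I would verify the defining relations by direct computation, checking $\omega_i\wedge\omega_j=0$ for $i\neq j$ and $\tfrac12\,\omega_i\wedge\omega_i=\ell\sqrt{c_7\ell}\,v^{4567}$ independently of $i$. With the $\mathrm{SU}(2)$-structure in hand and $\eta^1,\eta^2,\eta^3$ a global coframe of $T^*T^3$, \Cref{lem:G2fromSU2} immediately yields that $\varphi^1$ is a $\mathrm{G}_2$-structure. To read off the metric I would introduce the rescaled coframe $e^i=\sqrt{c_i}\,v^i$ (with $c_4=c_5=c_6\defeq\ell$); a short check gives $\omega_1=e^{45}+e^{67}$, $\omega_2=e^{46}-e^{57}$, $\omega_3=-e^{47}-e^{56}$, so that $\varphi^1=e^{123}+e^{145}+e^{167}+e^{246}-e^{257}-e^{347}-e^{356}$ has the constant coefficients of an associative form in the coframe $\lbrace e^i\rbrace$. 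Hence $\lbrace e^i\rbrace$ is orthonormal for the induced metric, which is precisely \eqref{eq:metricS3T4}.

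\emph{Torsion classes.} Working in the orthonormal coframe I would compute $\psi=*\varphi^1$ term by term, and then $\dd\varphi^1$ and $\dd\psi$ from the structure equations; since the $\eta^i$ are closed one has $\dd\varphi^1=-\sum_i\eta^i\wedge\dd\omega_i$, which collapses to three monomials supported on $v^{1457},v^{2467},v^{3567}$. Comparing with $\dd\varphi=\tau_0\,\psi+3\,\tau_1\wedge\varphi+*\tau_3$ and $\dd\psi=4\,\tau_1\wedge\psi+*\tau_2$, I would extract the classes using the $\mathrm{G}_2$-pairings: wedging with $\varphi$ isolates $\tau_0$ via $\dd\varphi\wedge\varphi=7\tau_0\,\dd vol$, and one checks $\dd\varphi\wedge\varphi=0$, so $\tau_0=0$; the direct computation of $\dd\psi$ returns $4\,\tau_1\wedge\psi$ exactly, with $\tau_1=\tfrac14\sqrt{c_7/\ell}\,v^7$ and no leftover $*\tau_2$ term, whence $\tau_2=0$ and the structure is integrable. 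Finally $*\tau_3=\dd\varphi^1-3\,\tau_1\wedge\varphi^1$, and applying $*$ returns the stated $\tau_3$, which one verifies a posteriori to satisfy $\tau_3\wedge\varphi=\tau_3\wedge\psi=0$, confirming $\tau_3\in\Omega^3_{27}$. I expect this last extraction of $\tau_1$ and $\tau_3$---keeping track of the Hodge star in the rescaled coframe while separating the three pieces of $\dd\varphi$ proportional to $\psi$, to $\tau_1\wedge\varphi$ and to $*\tau_3$---to be the main bookkeeping obstacle, although each individual step is elementary.

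\emph{Torsion three-form.} With $\tau_0=0$, \Cref{prop:uniqueG2connection} gives $H=-\tau_1\lrcorner\psi-\tau_3$. I would compute $\tau_1\lrcorner\psi=\iota_{\tau_1^\sharp}\psi$ using $\tau_1^\sharp=\tfrac{1}{4\sqrt{c_7\ell}}\,\partial_7$, where $\partial_7$ denotes the vector field dual to $v^7$; the contraction produces exactly the monomials $v^{124},v^{135},v^{236},v^{456}$ occurring in $\tau_3$, so that upon forming $-\tau_1\lrcorner\psi-\tau_3$ the three mixed terms cancel, leaving $H=\ell\,v^{456}$. Closedness is then immediate: $\dd(v^{456})=\dd v^4\wedge v^{56}-v^4\wedge\dd v^5\wedge v^6+v^{45}\wedge\dd v^6$, and each summand vanishes because it repeats an index, so $\dd H=0$.
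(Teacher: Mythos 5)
Your proposal is correct: I have checked the key steps (the $\mathrm{SU}(2)$ relations, the collapse of $\dd\varphi^1$ to the three monomials $v^{1457},v^{2467},v^{3567}$, the extraction of $\tau_0=0$, $\tau_1$, $\tau_3$, and the cancellation of the mixed terms in $-\tau_1\lrcorner\psi-\tau_3$ leaving $H=\ell\,v^{456}$) and they all go through. The paper itself gives no proof---it cites \cite{Fino:2023vdp} and calls the torsion classes ``a straightforward computation''---and your direct verification in the rescaled orthonormal coframe is exactly that computation, so the approaches coincide.
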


The following $\mathrm{G}_2$-structure corresponds to one of the examples considered in \cite{Fiset:2021azq} to motivate the $\SV_a$ algebra:

\begin{proposition}
    Consider the manifold $S^3\times T^4$ with a basis of its dual Lie algebra as above. Let $\ell,c_1,c_2,c_3,c_7>0$ be constant positive numbers. The forms
    \begin{align*}
        \tilde{\omega}_1&=\sqrt{c_7 c_1}\,v^{71}+\sqrt{c_2 c_3}\, v^{23} \, , & \tilde{\omega}_2&=\sqrt{c_7 c_2}\,v^{72}-\sqrt{c_1 c_3}\, v^{13} \, , \\
        \tilde{\omega}_3&=-\sqrt{c_7 c_3}\, v^{73}-\sqrt{c_1 c_2}\,v^{12} \, , & &
    \end{align*}
    define an $\mathrm{SU}(2)$-structure on $T^4$. Let
    \begin{equation*}
        \tilde{\eta}^1=\sqrt{\ell}\, v^4 \,, \qquad \tilde{\eta}^2=\sqrt{\ell}\, v^5 \,, \qquad \tilde{\eta}^1=\sqrt{\ell}\, v^6 \,, 
    \end{equation*}
    be a global frame of $T^*S^3$, then 
    \begin{equation*}
        \varphi^2=\tilde{\eta}^1\wedge\tilde{\eta}^2\wedge\tilde{\eta}^3 +\tilde{\eta}^1\wedge\tilde{\omega}_1+\tilde{\eta}^2\wedge\tilde{\omega}_2+\tilde{\eta}^3\wedge\tilde{\omega}_3
    \end{equation*}
    determines an integrable $\mathrm{G}_2$-structure on $S^3\times T^4$ with metric as in \eqref{eq:metricS3T4} and torsion classes
    \begin{equation*}
        \tau_0=\frac{6}{7}\frac{1}{\sqrt{\ell}} \, , \qquad \tau_1=0 \, , \qquad \tau_3=\frac{1}{7\sqrt{\ell}}\varphi^2-\ell v^{456} \, .
    \end{equation*}
    The torsion three-form is closed and given by \eqref{eq:HS3T4}.
\end{proposition}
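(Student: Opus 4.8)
The plan is to treat everything on the Lie group $S^3\times T^4=\mathrm{SU}(2)\times\mathrm{U}(1)^4$ at the level of left-invariant forms, so that $\dd$ is the Chevalley--Eilenberg differential fixed by the structure equations and the whole statement reduces to linear algebra on $\Lambda^\bullet(\mathbb{R}^7)^*$. First I would check that $\{\tilde\omega_i\}_{i=1,2,3}$ is an $\mathrm{SU}(2)$-structure on the $T^4$ factor spanned by $v^1,v^2,v^3,v^7$: each product $\tilde\omega_i\wedge\tilde\omega_j$ with $i\neq j$ vanishes because every resulting monomial repeats one of $v^1,v^2,v^3,v^7$, while a short computation gives $\tfrac12\,\tilde\omega_i\wedge\tilde\omega_i=\sqrt{c_1c_2c_3c_7}\,v^{1237}\neq 0$ for all $i$, which is exactly the required normalization. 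Lemma \ref{lem:G2fromSU2}, applied with $Y=S^3$ (frame $\tilde\eta^i=\sqrt\ell\,v^{3+i}$) and $X=T^4$, then guarantees that $\varphi^2$ is a genuine $\mathrm{G}_2$-structure. Reading off the induced metric from the $\tilde\eta^i$ and the $\tfrac12\tilde\omega_i^2$, or directly from $g(X,Y)\,\dd\mathrm{vol}=\tfrac16(X\lrcorner\varphi^2)\wedge(Y\lrcorner\varphi^2)\wedge\varphi^2$, reproduces \eqref{eq:metricS3T4}.

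Next I would produce the coassociative form $\psi^2=*\varphi^2$ with respect to this diagonal metric and orientation, and then compute $\dd\varphi^2$ and $\dd\psi^2$. The differentials are short to obtain: since $\dd v^i=0$ for $i\in\{1,2,3,7\}$ and only $\dd v^4=v^{56}$, $\dd v^5=-v^{46}$, $\dd v^6=v^{45}$ are nonzero, only monomials containing exactly one of $v^4,v^5,v^6$ can contribute, and one checks immediately that $\dd(v^{456})=0$ (reflecting that $v^{456}$ is the bi-invariant volume form of $\mathrm{SU}(2)$).

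With $\dd\varphi^2$ and $\dd\psi^2$ in hand, I would extract the torsion classes by projecting onto the irreducible $\mathrm{G}_2$-summands of $\Omega^4=\Omega^4_1\oplus\Omega^4_7\oplus\Omega^4_{27}$ and $\Omega^5=\Omega^5_7\oplus\Omega^5_{14}$. Concretely, $\tau_0$ is isolated from the singlet via $\dd\varphi^2\wedge\varphi^2=7\tau_0\,\dd\mathrm{vol}$ (using $\varphi^2\wedge\varphi^2=0$, which holds by $\mathrm{G}_2$-invariance, and the orthogonality $\varphi^2\perp\Omega^3_{27}$), giving $\tau_0=\tfrac{6}{7\sqrt\ell}$; the $\Omega^4_7$ and $\Omega^5_7$ parts, encoding $3\tau_1\wedge\varphi^2$ and $4\tau_1\wedge\psi^2$, should vanish, yielding $\tau_1=0$; the $\Omega^5_{14}$ part of $\dd\psi^2$ should vanish, giving $\tau_2=0$ and hence integrability; and finally $\tau_3=*\bigl(\dd\varphi^2-\tau_0\psi^2\bigr)=*\dd\varphi^2-\tau_0\varphi^2$ recovers $\tau_3=\tfrac{1}{7\sqrt\ell}\varphi^2-\ell\,v^{456}$, where the $v^{456}$ term is exactly what removes the singlet so that $\tau_3\in\Omega^3_{27}$ (a consistency check being $\tau_3\wedge\varphi^2=\tau_3\wedge\psi^2=0$). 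Substituting these into Proposition \ref{prop:uniqueG2connection} gives
\[
H=\tfrac16\tau_0\,\varphi^2-\tau_1\lrcorner\psi^2-\tau_3=\tfrac{1}{7\sqrt\ell}\varphi^2-\bigl(\tfrac{1}{7\sqrt\ell}\varphi^2-\ell\,v^{456}\bigr)=\ell\,v^{456},
\]
matching \eqref{eq:HS3T4}, with closedness immediate from $\dd(v^{456})=0$.

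The wedge-product bookkeeping is routine; the step most prone to error, and the one I would treat most carefully, is the Hodge-star computation of $\psi^2$ together with the projections onto $\Omega^4_7\oplus\Omega^4_{27}$, since the metric is a nontrivial diagonal rescaling of the standard one by $c_1,c_2,c_3,\ell,c_7$ and the various radical factors must track consistently through $*$, the contraction $\tau_1\lrcorner\psi^2$, and the identity $*\psi^2=\varphi^2$.
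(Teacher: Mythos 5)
Your plan is exactly the ``straightforward computation'' the paper has in mind: verify the $\mathrm{SU}(2)$-structure conditions, invoke Lemma~\ref{lem:G2fromSU2}, compute $\dd\varphi^2$ and $\dd\psi^2$ from the structure equations, project onto the irreducible $\mathrm{G}_2$-summands to read off $\tau_0,\tau_1,\tau_2,\tau_3$, and recover $H$ from Proposition~\ref{prop:uniqueG2connection}; the paper supplies no further argument. The outline is correct (modulo an immaterial orientation sign, $v^{7123}=-v^{1237}$, in the normalization check), so there is nothing to add.
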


\begin{remark}
    Note that both $\mathrm{G}_2$-structures in the previous Propositions have the same metric and torsion three-form. This will have interesting implications for our main results regarding vertex algebras in \Cref{sec:embeddings}.
\end{remark}

\subsection{\texorpdfstring{$S^3\times S^3\times S^1$}{S3xS3xS1}}

We now consider the direct product of two three-dimensional spheres and a circle, $S^3\times S^3\times S^1$, which we identify with the Lie group $\mathrm{SU}(2)\times\mathrm{SU}(2)\times\mathrm{U}(1)$. We choose a basis of the Lie algebra $\mathfrak{su}(2)\oplus\mathfrak{su}(2)\oplus\mathbb{R}$ such that its dual basis $\lbrace v^1,\dots,v^7\rbrace$ satisfies the following structure equations
\begin{align*}
    \dd v^1&=v^{23}\,, & \dd v^2&=-v^{13}\,, & \dd v^3&=v^{12}\,, & & \\
    \dd v^4&=v^{56}\,, & \dd v^5&=-v^{46}\,, & \dd v^6&=v^{45}\,, & \dd v^7&=0\,.
\end{align*}
Following \cite{Fino:2023vdp}, we construct three different $\mathrm{G}_2$-structures on $S^3\times S^3\times S^1$ by choosing different $\mathrm{SU}(3)$-structures on $S^3\times S^3$ and applying \Cref{lem:G2fromSU3}.

\begin{lemma}[\cite{Fino:2023vdp}]
\label{lem:SU3structure}
    Consider the manifold $S^3\times S^3$ with a basis of its dual Lie algebra as above. Let $\ell,s>0$ be constant positive numbers. The forms
    \begingroup
    \allowdisplaybreaks
    \begin{align*}
        \omega&=\sqrt{s\ell}\,v^{14} +\sqrt{s\ell}\,v^{25} -\sqrt{s\ell}\,v^{36} \, , \\
        \Omega_+&= \sqrt{s^3}\,v^{123}+\ell\sqrt{s}\,v^{156}-\ell\sqrt{s}\,v^{246}-\ell\sqrt{s}\,v^{345} \, , \\
        \Omega_-&= \sqrt{\ell^3}\,v^{456}+s\sqrt{\ell}\,v^{234}-s\sqrt{\ell}\,v^{135}-s\sqrt{\ell}\,v^{126} \, , 
    \end{align*}%
    \endgroup
    define an $\mathrm{SU}(3)$-structure on $S^3\times S^3$ with holomorphic volume form $\Omega=\Omega_++i\Omega_-$. Similarly, any complex rotation of the holomorphic volume form $\tilde{\Omega}=e^{i\theta}\Omega$ also defines an $\mathrm{SU}(3)$-structure on $S^3\times S^3$.
\end{lemma}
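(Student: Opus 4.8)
The plan is to verify directly the two defining relations of an $\mathrm{SU}(3)$-structure for the pair $(\omega,\Omega)$ with $\Omega=\Omega_++i\Omega_-$, exploiting that $S^3\times S^3=\mathrm{SU}(2)\times\mathrm{SU}(2)$ is a Lie group and hence parallelizable. Since $\lbrace v^1,\dots,v^6\rbrace$ is a global coframe and the coefficients are nonzero constants (as $s,\ell>0$), all three forms are globally well defined, and their nowhere-vanishing will follow a posteriori from the normalization relation. The crucial simplification is that both $\mathrm{SU}(3)$ conditions are pointwise algebraic identities in the exterior algebra $\Lambda^\bullet(\mathbb{R}^6)^*$, so the structure equations for $\dd v^i$ play no role whatsoever (they would only enter when computing torsion), and the proof reduces to bookkeeping of wedge products of the standard monomials $v^I$.

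First I would check $\omega\wedge\Omega=0$ by splitting it into $\omega\wedge\Omega_+=0$ and $\omega\wedge\Omega_-=0$. The mechanism is purely combinatorial: each of the three monomials appearing in $\omega$, namely $v^{14}$, $v^{25}$, $v^{36}$, shares at least one index with every monomial of $\Omega_+$ (supported on $\lbrace1,2,3\rbrace$, $\lbrace1,5,6\rbrace$, $\lbrace2,4,6\rbrace$, $\lbrace3,4,5\rbrace$) and of $\Omega_-$ (supported on $\lbrace4,5,6\rbrace$, $\lbrace2,3,4\rbrace$, $\lbrace1,3,5\rbrace$, $\lbrace1,2,6\rbrace$). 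Hence every product vanishes termwise, and the first relation holds with no computation beyond listing index sets.

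Next I would verify the normalization $\tfrac{1}{6}\,\omega\wedge\omega\wedge\omega=\tfrac{i}{8}\,\Omega\wedge\overline{\Omega}$. Using that $\Omega_\pm$ are three-forms, so that $\Omega_+\wedge\Omega_+=\Omega_-\wedge\Omega_-=0$ and $\Omega_+\wedge\Omega_-=-\Omega_-\wedge\Omega_+$, the right-hand side collapses to
\begin{equation*}
\frac{i}{8}\,\Omega\wedge\overline{\Omega}=\frac{i}{8}(-2i)\,\Omega_+\wedge\Omega_-=\frac{1}{4}\,\Omega_+\wedge\Omega_-\, ,
\end{equation*}
so the relation to prove becomes $\tfrac{1}{6}\omega^3=\tfrac{1}{4}\Omega_+\wedge\Omega_-$. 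A short computation gives $\omega^3=6(s\ell)^{3/2}\,v^{123456}$, whence $\tfrac{1}{6}\omega^3=(s\ell)^{3/2}v^{123456}$. For the right-hand side the key structural point is that the four monomials of $\Omega_+$ pair off with the four monomials of $\Omega_-$ through complementary index sets ($\lbrace1,2,3\rbrace$ with $\lbrace4,5,6\rbrace$, $\lbrace1,5,6\rbrace$ with $\lbrace2,3,4\rbrace$, $\lbrace2,4,6\rbrace$ with $\lbrace1,3,5\rbrace$, $\lbrace3,4,5\rbrace$ with $\lbrace1,2,6\rbrace$), each pair contributing $+(s\ell)^{3/2}v^{123456}$; thus $\Omega_+\wedge\Omega_-=4(s\ell)^{3/2}v^{123456}$ and $\tfrac{1}{4}\Omega_+\wedge\Omega_-=(s\ell)^{3/2}v^{123456}$, matching $\tfrac{1}{6}\omega^3$. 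Since $s,\ell>0$, this common value is a nonvanishing volume form, which simultaneously confirms that $\omega$ and $\Omega$ are nowhere-vanishing. The claim about the rotated form $\tilde{\Omega}=e^{i\theta}\Omega$ is then immediate: $\omega\wedge\tilde{\Omega}=e^{i\theta}\omega\wedge\Omega=0$, while $\tilde{\Omega}\wedge\overline{\tilde{\Omega}}=e^{i\theta}e^{-i\theta}\,\Omega\wedge\overline{\Omega}=\Omega\wedge\overline{\Omega}$ by phase cancellation, so both conditions transfer verbatim and with the same $\omega$.

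There is no conceptual obstacle here; the statement is a verification. The only delicate point is the sign bookkeeping in the wedge products, and in particular checking that the prescribed coefficients $\sqrt{s^3}$, $\ell\sqrt{s}$, $\sqrt{\ell^3}$, $s\sqrt{\ell}$ conspire so that all four complementary pairs in $\Omega_+\wedge\Omega_-$ carry the \emph{same} sign and magnitude $(s\ell)^{3/2}$, and so that this total equals $\tfrac{1}{6}\omega^3$ \emph{exactly} rather than merely being proportional to it. I would therefore carry out the sign computation systematically, counting inversions to sort each $v^I$ into $v^{123456}$, to ensure no stray minus sign survives and that the normalization is matched on the nose.
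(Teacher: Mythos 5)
Your verification is correct and is essentially the intended argument: the paper itself gives no proof of this lemma (it is quoted from Fino--Mart\'in-Merch\'an--Raffero), and the statement reduces to exactly the pointwise algebra you carry out in the invariant coframe --- index-disjointness forces $\omega\wedge\Omega_\pm=0$ termwise, and the sign/coefficient count gives $\tfrac{1}{6}\omega^3=(s\ell)^{3/2}v^{123456}=\tfrac{1}{4}\Omega_+\wedge\Omega_-=\tfrac{i}{8}\Omega\wedge\overline{\Omega}\neq 0$, with the $U(1)$-rotation claim following from phase cancellation. I checked your four complementary pairings and all carry sign $+1$ (each permutation has an even number of inversions), so the normalization is matched on the nose as you claim; note only that your argument establishes the lemma relative to the paper's stated (purely algebraic) definition of an $\mathrm{SU}(3)$-structure, which is all that is required here.
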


\begin{proposition}
    Consider the manifold $S^3\times S^3\times S^1$ with a basis of the dual Lie algebra as above.  Let $\ell,s,c_7>0$ be constant positive numbers, let $\omega$, $\Omega$ be as in \Cref{lem:SU3structure} and let $\eta=\sqrt{c_7} \, v^7$. Applying \Cref{lem:G2fromSU3} results in the following:
    \begin{itemize}
        \item From $(\omega,\Omega)$ we obtain an integrable $\mathrm{G}_2$-structure that we denote by $\varphi^3$, with torsion classes
            \begin{equation*}
            \tau_0=\frac{6}{7}\frac{1}{\sqrt{s}} \, , \qquad \tau_1=\frac{1}{4}\sqrt{\frac{c_7}{\ell}}v^7 \, , \qquad \tau_3=\frac{1}{7\sqrt{s}}\varphi^3+\frac{1}{4\sqrt{\ell}}\Omega_- -s v^{123}-\ell v^{456} \, .
        \end{equation*}
        \item From $(\omega,\Omega^1\coloneqq e^{i\theta_1}\Omega)$ with $\tan\theta_1=\sqrt{\frac{\ell}{s}}$ we obtain an integrable $\mathrm{G}_2$-structure that we denote by $\varphi^4$, with torsion classes
            \begin{equation*}
            \tau_0=0 \, , \qquad \tau_1=\frac{\sqrt{c_7}}{4}\sqrt{\frac{1}{\ell}+\frac{1}{s}}\,v^7 \, , \qquad \tau_3= \frac{1}{4}\sqrt{\frac{1}{\ell}+\frac{1}{s}}\,\Omega^1_- -s v^{123}-\ell v^{456} \, .
        \end{equation*}
        \item From $(\omega,\Omega^2\coloneqq e^{i\theta_2}\Omega)$ with $\tan\theta_2=-\sqrt{\frac{s}{\ell}}$ we obtain an integrable $\mathrm{G}_2$-structure that we denote by $\varphi^5$, with torsion classes
            \begin{equation*}
            \tau_0=\frac{6}{7}\sqrt{\frac{1}{\ell}+\frac{1}{s}} \, , \qquad \tau_1=0 \, , \qquad \tau_3= \frac{1}{7}\sqrt{\frac{1}{\ell}+\frac{1}{s}}\varphi^5 -s v^{123}-\ell v^{456} \, .
        \end{equation*}
    \end{itemize}
    All three $\mathrm{G}_2$-structures have metric
    \begin{equation*}
        g=s\sum_{i=1}^3 v^i\otimes v^i + \ell\sum_{j=4}^6 v^j\otimes v^j + c_7 \, v^7\otimes v^7 \, , 
    \end{equation*}
    and closed torsion three-form equal to
    \begin{equation*}
        H=s \, v^{123}+ \ell \, v^{456} \, .
    \end{equation*}
\end{proposition}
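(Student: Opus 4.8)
The plan is to handle the three structures at once by writing $\varphi^\theta = \eta \wedge \omega + \Omega^\theta_+$ and $\psi^\theta = *\varphi^\theta = \tfrac12\,\omega\wedge\omega - \eta\wedge\Omega^\theta_-$, where $\eta = \sqrt{c_7}\,v^7$ and $\Omega^\theta_\pm = \mathrm{Re}/\mathrm{Im}\,(e^{i\theta}\Omega)$, so that $\varphi^3,\varphi^4,\varphi^5$ are the cases $\theta=0,\theta_1,\theta_2$. Since the metric induced by \Cref{lem:G2fromSU3} depends only on $\omega$ and on $\Omega\wedge\overline{\Omega}$, both invariant under $\Omega\mapsto e^{i\theta}\Omega$, the three structures automatically share the stated metric, and it remains only to compute torsion. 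First I would record, from the structure equations, the exterior derivatives on $S^3\times S^3$: one finds $\dd\Omega_+ = \tfrac{1}{2\sqrt{s}}\,\omega\wedge\omega$ and $\dd\Omega_- = \tfrac{1}{2\sqrt{\ell}}\,\omega\wedge\omega$, together with $\omega\wedge\Omega_\pm = 0$, $\dd\omega\wedge\omega = 0$ and $\dd\eta = 0$. These identities collapse $\dd\varphi^\theta$ and $\dd\psi^\theta$ into combinations of $\omega\wedge\omega$ and $v^7\wedge\dd\omega$ only.

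The extraction of $\tau_0$ and $\tau_1$ is then purely algebraic. Using $\varphi\wedge\varphi=0$, $\psi\wedge\varphi = 7\,\vol$ and $*\tau_3\wedge\varphi = 0$ (all forced by $\mathrm{G}_2$-representation theory), the scalar class is $\tau_0 = \tfrac17 *(\dd\varphi^\theta\wedge\varphi^\theta)$, and the computation gives $\tau_0 = \tfrac67\big(\tfrac{\cos\theta}{\sqrt{s}} - \tfrac{\sin\theta}{\sqrt{\ell}}\big)$, which vanishes exactly for $\tan\theta = \sqrt{\ell/s}$, i.e. at $\theta_1$. For the one-form class I would compute $\dd\psi^\theta = \sqrt{c_7}\big(\tfrac{\sin\theta}{\sqrt{s}} + \tfrac{\cos\theta}{\sqrt{\ell}}\big)\,v^7\wedge\psi^\theta$; being manifestly of the form $\alpha\wedge\psi$ with $\alpha\in\Omega^1$, it lies in $\Omega^5_7$, so its $\Omega^5_{14}$-part $*\tau_2$ is zero. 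This proves integrability $\tau_2=0$ and yields $\tau_1 = \tfrac14\sqrt{c_7}\big(\tfrac{\sin\theta}{\sqrt{s}} + \tfrac{\cos\theta}{\sqrt{\ell}}\big)\,v^7$, which vanishes exactly for $\tan\theta = -\sqrt{s/\ell}$, i.e. at $\theta_2$. Specialising $\theta$ reproduces the three tabulated pairs $(\tau_0,\tau_1)$.

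It remains to pin down $\tau_3$ and $H$. The class $\tau_3 = *\dd\varphi^\theta - \tau_0\,\varphi^\theta - 3*(\tau_1\wedge\varphi^\theta)$ is the $\mathbf{27}$-component and so lies in $\Omega^3_{27}$ automatically. The one genuine Hodge computation is $*\dd\varphi^\theta$: using $*(\omega\wedge\omega) = 2\,\eta\wedge\omega$ and reducing $*(v^7\wedge\dd\omega)$ to the six-dimensional star on $S^3\times S^3$, where in an orthonormal coframe $e^i=\sqrt{g_{ii}}\,v^i$ the primitive parts $\Omega'_+ = \Omega_+ - e^{123}$ and $\Omega'_- = \Omega_- - e^{456}$ obey $*_6\Omega'_+ = \Omega'_-$ and $*_6\Omega'_- = -\Omega'_+$. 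Feeding this into \Cref{prop:uniqueG2connection}, rewritten as $H = \tfrac76\tau_0\,\varphi^\theta - 4\,\tau_1\lrcorner\psi^\theta - *\dd\varphi^\theta$, I expect every $\theta$-dependent term to cancel, leaving $H = s\,v^{123} + \ell\,v^{456}$ for all three structures; then $\dd H=0$ is immediate from $\dd v^{123}=\dd v^{456}=0$, and back-substitution recovers the tabulated $\tau_3$.

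The \textbf{main obstacle} is the bookkeeping in this last step: $\dd\varphi^\theta$ and its dual carry many monomials, and one must track signs through the rescaling $e^i=\sqrt{g_{ii}}\,v^i$ and the $\mathbf{7}/\mathbf{14}/\mathbf{27}$ projections without error. The saving grace is that the $\mathrm{SU}(3)$ relations---$\omega\wedge\Omega=0$, the normalisation $\tfrac16\omega^3 = \tfrac{i}{8}\Omega\wedge\overline{\Omega}$, and the action of $*_6$ on $\Omega_\pm$---annihilate almost all cross terms, reducing the verification to finite linear algebra in $s,\ell,c_7$ and $\theta$. The clean disappearance of $\theta$ from both the metric and $H$ is precisely the mechanism behind the fact, recorded in the statement, that these three structures share one metric and one torsion three-form despite having different torsion classes.
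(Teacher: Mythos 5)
Your proposal is correct and follows essentially the same route the paper intends: the paper offers no proof beyond attributing the construction to Fino--Mart\'in-Merch\'an--Raffero and treating the torsion classes as a direct computation from the structure equations, which is exactly what you carry out (your key identities $\dd\Omega_\pm = \tfrac{1}{2\sqrt{s}}\omega\wedge\omega$, $\tfrac{1}{2\sqrt{\ell}}\omega\wedge\omega$, $\dd\omega\wedge\omega=0$, $*(\omega\wedge\omega)=2\eta\wedge\omega$ and $*_6\Omega'_\pm=\pm\Omega'_\mp$ all check out, and your $\theta$-dependent formulas for $\tau_0$ and $\tau_1$ specialise correctly at $\theta=0,\theta_1,\theta_2$). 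The only added value over the paper's implicit argument is the uniform treatment of the three structures via the phase $\theta$, which makes the $\theta$-independence of the metric and of $H$ transparent rather than a case-by-case coincidence.
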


\section{Superaffine algebras and the deformed Shatashvili--Vafa}

\subsection{SUSY vertex algebras}

In this section we review the basics on SUSY vertex algebras and present those that play a role in this note, including the deformed Shatashvili--Vafa vertex algebra $\SV_a$. For a detailed introduction to vertex algebras, we refer the reader to \cite{Kac:1996wd}. We will use extensively the formalism introduced by Heluani--Kac \cite{Heluani:2006pk}, although the $\mathcal{N}=1$ case we employ here was studied earlier by Barron \cite{Barron:1999ye}.

A \emph{vertex algebra} is a vector superspace $V$ (that is, a vector space with a $\mathbb{Z}_2$-grading) endowed with a non-vanishing even vector $\vert 0 \rangle\in V$ (vacuum), an even endomorphism $T:V\rightarrow V$ (infinitesimal translation) and a parity-preserving linear map $Y:V\rightarrow \mathrm{End}(V)[[z^\pm]]$ (state-field correspondence) mapping each vector to a \emph{field}, defined as a formal sum
    \begin{equation*}
        a(z)\coloneqq Y(a,z)=\sum_{n\in\mathbb{Z}}a_{(n)}z^{-n-1} \, ,
    \end{equation*}
in an even variable $z$, with \emph{Fourier modes} $a_{(n)} \in \mathrm{End}(V)$. This data must satisfy that $Y (a, z)b$ is a formal Laurent series for all $b \in V$, so that the operator product expansions are finite sums
    \begin{equation*}
        a(z)b(w)\sim \sum_{n\geq 0}\frac{(a_{(n)}b)(w)}{(z-w)^{n+1}}\, ,
    \end{equation*}
along with the vacuum axioms, translation invariance and locality (see e.g. \cite{Kac:1996wd}). We will call $a_{(n)}b$ the \emph{$n$-product} of $a$ and $b$.

When the vertex algebra is \emph{conformal}, the $T$-action is enhanced to an action of the Virasoro algebra for some central charge $c\in\mathbb{C}$. We are interested in the case where this action is further enhanced to an $\mathcal{N}=1$ superconformal symmetry by including an additional odd linear map $S:V\rightarrow V$ such that $S^2=T$. This motivates the definition of a SUSY vertex algebra \cite{Heluani:2006pk}.

A \emph{SUSY vertex algebra} is a tuple $(V,\vert0\rangle,S,Y(\cdot,z))$, where $(V,\vert0\rangle,T = S^2,Y(\cdot,z))$ is a vertex algebra and $S:V\rightarrow V$ is an odd linear map which is furthermore a derivation for the $n$-products.

We can now define the \emph{$\lambda$-bracket} (the formal Fourier transform by an even formal parameter $\lambda$ of the operator product expansion) and the \emph{normally ordered product} as follows
\begin{equation*}
    [a_\lambda b]=\sum_{n\in\mathbb{Z}} \frac{\lambda^{n}}{n!}a_{(n)}b\, , \qquad \normord{ab}\,=a_{(-1)}b \, .
\end{equation*}
Just as a Lie algebra freely generates the universal enveloping algebra, a $\mathbb{C}[T]$-module, $\lambda$-bracket satisfying a certain set of axioms---that can be found in \cite{Heluani:2006pk}---and an odd derivation $S$ with respect to the $\lambda$-bracket such that $S^2=T$ freely generate a universal enveloping SUSY vertex algebra. The following example is constructed in this way:

\begin{example}\label{example:superfaffine}
    Let $(\mathfrak{g},(\cdot\vert\cdot))$ be a quadratic Lie 
   algebra and $k\in\mathbb{C}$ a scalar. Let $\Pi\mathfrak{g}$ be the corresponding purely odd vector superspace, identifying elements $a$ of $\mathfrak{g}$ with their corresponding odd copies $\Pi a$ in $\Pi\mathfrak{g}$. The \emph{universal superaffine vertex algebra} with level $k$ associated to $\mathfrak{g}$ is the SUSY vertex algebra $V^k(\mathfrak{g}_{\text{super}})$ freely generated by the $\mathbb{C}[T]$-module $\mathfrak{g}\oplus\Pi\mathfrak{g}$ (and the scalar $k$), the odd derivation $S$ defined by
    \begin{equation*}
        Sa=T\,\Pi a \, , \qquad S \, \Pi a = a \, ,
    \end{equation*}
    for all $a\in\mathfrak{g}$, and the $\lambda$-brackets
    \begin{equation*}
        [a_\lambda b]=[a,b]+\lambda (a\vert b)k \, , \qquad  [a_\lambda \Pi b]=\Pi [a,b] \, , \qquad [\Pi a_\lambda \Pi b]=(b\vert a)k \, ,
    \end{equation*}
    for all $a,b\in\mathfrak{g}$, which satisfy the axioms of \cite{Heluani:2006pk}.
\end{example}

\begin{remark}
For the quadratic Lie algebras we will consider, the universal superaffine vertex algebra with level $k=2$ embeds into the chiral de Rham complex of the corresponding Lie group. For more details, see \Cref{sec:embeddingsCDR}.
\end{remark}

\subsection{The deformed Shatashvili--Vafa vertex algebra}\label{sec:deformedSV}

The \emph{deformed Shatashvili--Vafa algebra} with parameter $a\in\mathbb{C}$, which we will denote by $\SV_a$, is the SUSY vertex algebra freely generated by the $\mathbb{C}[T]$-module
    \begin{equation*}
        \mathbb{C}G\oplus \mathbb{C}L\oplus \mathbb{C}\Phi\oplus \mathbb{C}K\oplus \mathbb{C}X\oplus \mathbb{C}M \, ,
    \end{equation*}
    the odd derivation $S$ defined by
    \begin{equation*}
        S G=2L \, , \quad 2S L = T\,G \, , \quad S \Phi=K \, , \quad S K = T\,\Phi \, , \quad S X=M \, , \quad S M = T\,X \, ,
    \end{equation*} 
    and the $\lambda$-brackets
    \begingroup
    \allowdisplaybreaks
    \begin{align*}
        [L_\lambda L]&= (T +2\lambda) L + \left( \frac{21}{2}+3a^2 \right)\frac{\lambda^3}{12} \, , \qquad
        [L_\lambda G]= \left(T +\frac{3}{2}\lambda\right) G \, , \\
        [G_\lambda G]&= 2 L + \left( \frac{21}{2}+3a^2 \right)\frac{\lambda^2}{3} \, , \qquad
        [L_\lambda \Phi]= \left(T +\frac{3}{2}\lambda\right) \Phi \, , \\   
        [L_\lambda X]&= -\frac{7}{24}\lambda^3+ (T +2\lambda) X  \, , \qquad  
        [G_\lambda \Phi]= a\,\frac{\lambda^2}{2}+ K \, , \\   
        [G_\lambda X]&= \left(-\frac{1}{2}G+a\Phi\right)\lambda+M  \, , \qquad 
        [\Phi_\lambda \Phi]= -\frac{7}{2}\lambda^2+6X \, , \\  
        [\Phi_\lambda X]&= -\left(\frac{5}{2}T + \frac{15}{2}\lambda \right) \Phi  \, , \qquad
        [X_\lambda X]= \frac{35}{24}\lambda^3 -\left(5T+10\lambda \right) X  \, , \\
        [L_\lambda K]&= a\frac{\lambda^3}{4} + \left(T +2\lambda\right) K \, , \qquad
        [L_\lambda M]= \left(-\frac{1}{2}G+a\Phi\right)\frac{\lambda^2}{2}+ \left(T +\frac{5}{2}\lambda\right) M  \, , \\  
        [G_\lambda K]&= (T+3\lambda)\Phi \, , \qquad  
        [G_\lambda M]= -\frac{7}{12}\lambda^3+\left(L-a K\right)\lambda+ (T+4\lambda)X  \, , \\   
        [\Phi_\lambda K]&= a(3T+6\lambda)\Phi-\left( \frac{3}{2}T +3\lambda \right)G -3M \, , \\  
        [\Phi_\lambda M]&= -a\frac{5}{4}\lambda^3+a(3T+6\lambda)X +\left(-\frac{1}{2}T + \frac{9}{2}\lambda \right) K +3\normord{\Phi G} \, , \\
        [X_\lambda K]&= -a\frac{5}{4}\lambda^3+a(3T+6\lambda)X -3K\lambda -3\normord{\Phi G}  \, , \\
        [X_\lambda M]&=  +a\frac{4}{7}\normord{X\Phi} +a\left( -\frac{27}{14}T^2-3T\lambda-3\lambda^2 \right)\Phi \\
        &+\left(\frac{1}{4}T^2 -\frac{9}{4}T\lambda -\frac{9}{4}\lambda^2\right)G +\left( \frac{1}{2}T-5\lambda \right) +4\normord{XG}  \, , \\   
        [K_\lambda K]&= -\frac{7}{2}\lambda^3+ a(3T+6\lambda)K +(3T+6\lambda)X -(3T+6\lambda)L \, , \\  
        [K_\lambda M]&= a(3T+6\lambda)M -\left(\frac{11}{2}T\lambda +\frac{15}{2}\lambda^2\right)\Phi +3\normord{GK}-6\normord{L\Phi} \, , \\  
        [M_\lambda M]&= -\frac{35}{24}\lambda^4 - a(3T\lambda+3\lambda^2)K  + a(\frac{11}{5}T^2+10T\lambda+10\lambda^2)X \\
        &- \left(\frac{5}{2}T^2 -\frac{9}{2}T\lambda -\frac{9}{2}\lambda^2\right) -6\normord{GM} +12\normord{LX} +\frac{2}{5}\normord{XX} -\normord{KK} \, ,
    \end{align*}%
    \endgroup
    which satisfy all the required axioms \cite{Heluani:2006pk} provided that the following condition is imposed:
    \begin{equation}
    \label{eq:singularvector}
        0=a\left( \frac{9}{7}T^2\Phi-\frac{8}{7}\normord{\Phi X} \right)+4\normord{GX}-2\normord{\Phi K}-4T\,M-T^2G \, .
\end{equation}
In particular, the vectors $G$ and $L$ in $\SV_a$ define a Neveu-Schwarz vertex algebra with central charge $c=\frac{21}{2}+3a^2$.

\begin{remark}\label{rem:SW}
In the case $a = 0$, the relation \eqref{eq:singularvector} was more conceptually understood in \cite{Heluani:2014uaa}, where the Shatashvili--Vafa vertex algebra $\SV_0$ was obtained as a quotient of a specific universal superaffine vertex algebra (cf. Example \ref{example:superfaffine}). In this setup, one first undertakes quantum Hamiltonian reduction to obtain a two-parameter $W$-algebra $\mathcal{SW}(\tfrac{3}{2},\tfrac{3}{2},2)$, and then quotient by an ideal generated by $\eqref{eq:singularvector}$, for $a = 0$ and a suitable choice of parameters, to obtain $\SV_0$. It was then observed in \cite{Fiset:2021azq} that a similar procedure applies to $\SV_a$, with the more general relation \eqref{eq:singularvector}.
\end{remark}

\begin{remark}
Note that the parameter $a$ corresponds to $i\sqrt{\tfrac{2}{\mathrm{k}}}$ in \cite{Fiset:2021azq}, where $\mathrm{k}$ is related to the radius of the $\mathrm{AdS}_3$ spacetime via 
$$
\mathrm{k}=R_{\mathrm{AdS}_3}^2/\alpha'.
$$ 
In the limit $a\rightarrow0$ we formally recover a Minkowski spacetime and the algebra reduces to that of Shatashvili--Vafa \cite{Shatashvili:1994zw}.
\end{remark}

\section{Embeddings}
\label{sec:embeddings}

\subsection{Chiral de Rham complex}
\label{sec:embeddingsCDR}

We start recalling the coordinate independent description of the chiral de
Rham complex using Courant algebroids \cite{Heluani:2008susyII,Heluani:2008hw}, which we shall apply later to the situation of a Lie group. We refer the reader to \cite{Alvarez-Consul:2020hbl} and references therein for details.

Let $E$ be an exact Courant algebroid over a smooth manifold $M$, with exterior differential $d$, given by a short exact sequence of smooth bundles of the form
\begin{equation}\label{eq:exactE}
0 \longrightarrow T^*M \overset{\pi^*}{\longrightarrow} E \overset{\pi}{\longrightarrow} TM \longrightarrow 0 \, .
\end{equation}
We will denote the indefinite pairing and the Dorfman bracket on sections of $E$ by $\langle , \rangle$ and $[,]$, respectively. Recall that a choice of isotropic splitting $s \colon T \to E$ of \eqref{eq:exactE} induces a vector bundle isomorphism $E \cong T \oplus T^*$, such that  
\begin{equation*}\label{eq:pairingbracketexp}
\langle X + \xi, X + \xi \rangle = \xi(X) \, , \qquad [X+ \xi,Y + \eta] = [X,Y] + L_X \eta - i_Y d\xi + i_Yi_X H \, ,
\end{equation*}
where $H \in \Omega^3(M)$ is a closed three-form determined by $s$.

Let $\Pi E$ be the corresponding purely odd super vector bundle. We denote by $\underline{\mathbb{C}}$ the sheaf of locally constant functions on $M$. The next result provides a coordinate-free description of the chiral de Rham complex on the smooth manifold $M$.

\begin{proposition}[\cite{Heluani:2008susyII,Heluani:2008hw}]\label{prop:CDRE}
There exists a unique sheaf of SUSY vertex algebras $\Omega^\mathrm{ch}_M(E)$ over $M$ endowed with embeddings of sheaves of $\underline{\mathbb{C}}$-modules
$$
\iota \colon C^\infty(M) \hookrightarrow \Omega^\mathrm{ch}_M(E) \, , \qquad j \colon E \oplus \Pi E \hookrightarrow \Omega^\mathrm{ch}_M(E) \, ,
$$
satisfying the following properties:

\begin{enumerate}

\item $\iota$ is an isomorphism of unital commutative algebras onto its image
$$
\iota(fg) = \, \normord{\iota(f)\iota(g)} \, ,
$$

\item $\iota$ and $j$ are compatible with the $C^\infty(M)$-module structure of $E \oplus \Pi E$ and the odd endomorphism $S$ of $\Omega^\mathrm{ch}_M(E)$
$$
[\iota(f)_\lambda \iota(g)] = 0 \, , \quad j(fA) =\, \normord{\iota(f)j(A)} \, , \quad S\iota(f) = j(\Pi df) \, ,
$$

\item $\iota$ and $j$ are compatible with the Dorfman bracket and pairing
$$
{[j(A)}_\lambda j(B) ] = j([A,B]) + 2 \lambda \iota (\langle A,B\rangle) \, , \quad {[j(A)}_\lambda j(\Pi B)] = j(\Pi[A,B]) \, ,$$
$${[j(\Pi A)}_\lambda j(\Pi B)] = 2 \iota (\langle A,B\rangle) \, ,
$$

\item $\iota$ and $j$ are compatible with the action of $\Gamma(E \oplus \Pi E)$ on $C^\infty(M)$
$$
{[j(A)}_\lambda \iota(f)] = \iota(\pi(A)(f)) \, , \quad {[j(\Pi A)}_\lambda \iota(f)] = 0 \, ,
$$

\item  $\Omega^\mathrm{ch}_M(E)$ is universal with these properties,

\end{enumerate}
for all $f,g \in C^\infty(M)$, $A,B \in \Gamma(E)$.
\end{proposition}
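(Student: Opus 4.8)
The plan is to construct $\Omega^\mathrm{ch}_M(E)$ from an explicit local model glued over $M$, and then to deduce its uniqueness and universality from the reconstruction theorem for SUSY vertex algebras. The starting observation is that the coordinate-free relations (1)--(4) are expressed entirely through the pairing $\langle\,,\,\rangle$, the Dorfman bracket $[\,,\,]$, the anchor $\pi$, and the commutative algebra structure on $C^\infty(M)$. This manifest invariance under automorphisms of $E$ is precisely what will make the gluing consistent and the universal object well defined.

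First I would fix a chart $U \subseteq M$ over which $E$ admits an isotropic splitting $E|_U \cong TU \oplus T^*U$, with associated closed three-form $H|_U = dB$ for some $B \in \Omega^2(U)$ after shrinking $U$. In terms of coordinates $x^i$ on $U$, I would write down the standard $\mathcal{N}=1$ free superfield vertex algebra (the SUSY $\beta\gamma$--$bc$ system): functions of the bosonic coordinate fields realize the image of $\iota$, while the conjugate momentum fields, the one-form fields, and their odd superpartners realize $j(A)$ and $j(\Pi A)$. One then checks relations (1)--(4) directly against the free-field operator product expansions, the term $i_Y i_X H$ in the Dorfman bracket being reproduced by the $B$-dependent redefinition of the currents. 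A Poincar\'e--Birkhoff--Witt basis for this free SUSY vertex algebra shows that $\iota$ and $j$ are injective over $U$.

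Next I would glue the local models into a sheaf. Over an overlap, two isotropic splittings differ by a coordinate change composed with a $B$-field transformation, that is, by an automorphism of the exact Courant algebroid $E$. The key step is that each such Courant automorphism lifts to an isomorphism of the associated local SUSY vertex algebras intertwining $\iota$ and $j$; here the invariance of (1)--(4) noted above does the work, since both source and target structures are prescribed by the same coordinate-free data. Verifying the cocycle condition on triple overlaps then patches the local pieces into a genuine sheaf $\Omega^\mathrm{ch}_M(E)$, and injectivity of $\iota$ and $j$, being a local property, is already established.

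Finally, for uniqueness and property (5), I would invoke reconstruction: any sheaf of SUSY vertex algebras $V$ carrying maps $\iota', j'$ that satisfy (1)--(4) is generated by their images, all of whose $\lambda$-brackets and normally ordered products are dictated by (1)--(4); hence there is a unique morphism $\Omega^\mathrm{ch}_M(E) \to V$ with $\iota \mapsto \iota'$ and $j \mapsto j'$, and taking $V$ to be a second candidate yields uniqueness up to canonical isomorphism. I expect the gluing to be the main obstacle: one must show not only that each Courant automorphism lifts to a vertex-algebra isomorphism, but that the lift is canonical and composes correctly on triple overlaps, with no anomalous quantum correction spoiling the cocycle condition. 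Controlling these corrections is the technical core of the construction, and is exactly the content of the results of Heluani and collaborators cited above.
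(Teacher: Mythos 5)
The paper offers no proof of this proposition: it is recalled verbatim from \cite{Heluani:2008susyII,Heluani:2008hw} (see also \cite{Malikov:1998dw}), so there is no internal argument to compare yours against. Your outline does follow the strategy of those references: local superfield ($\beta\gamma$--$bc$) models attached to isotropic splittings with $H$ locally exact, gluing via lifts of exact Courant algebroid automorphisms (where the supersymmetric cancellation of the gluing anomaly is the technical core, as you correctly identify), and a generators-and-relations characterization for universality. One step, however, is stated too loosely to stand as written: a sheaf $V$ equipped with maps $\iota', j'$ satisfying (1)--(4) need \emph{not} be generated by their images, and relations (1)--(4) do not by themselves ``dictate'' all $\lambda$-brackets and normally ordered products of composite elements. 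What universality actually requires is that $\Omega^{\mathrm{ch}}_M(E)$ be exhibited as the quotient of the freely generated sheaf of SUSY vertex algebras on $C^\infty(M)\oplus E\oplus \Pi E$ by the ideal generated by the relations (1)--(4), and that this quotient be identified with your glued free-field model by a PBW-type argument; the unique morphism to $V$ then exists because it factors through the subalgebra of $V$ generated by the images of $\iota'$ and $j'$, which is a quotient of the same free object. You invoke a PBW basis only to get injectivity of $\iota$ and $j$, so this identification --- which is where the real content of uniqueness and of property (5) lies --- is absent from your sketch, though it is supplied in the cited works.
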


To apply this to our results in the next section, assume now that our manifold $M$ is a compact Lie group $K$, and let $E$ be a left-equivariant exact Courant algebroid over $K$. By \cite[Proposition 4.5]{Alvarez-Consul:2020hbl}, we can associate to $E$ a quadratic Lie algebra given by the invariant sections of $E$
$$
\mathfrak{g} = \Gamma(E)^K \, ,
$$
endowed with the induced bracket and pairing. Applying the universal construction in Proposition \ref{prop:CDRE}, we obtain an embedding of the superaffine vertex algebra $V^2(\mathfrak{g}_{\text{super}})$ of level $k =2$ (see \cite[Proposition 4.17]{Alvarez-Consul:2020hbl}).

\begin{proposition}[\cite{Alvarez-Consul:2020hbl}]\label{prop:superaffineembed}
Let $K$ be a compact Lie group and $E$ a left-equivariant exact Courant algebroid over $K$. Then there is an embedding
$$
V^2(\mathfrak{g}_{\text{super}}) \hookrightarrow \Gamma(K,\Omega^\mathrm{ch}_K(E))
$$
of the superaffine vertex algebra $V^2(\mathfrak{g}_{\text{super}})$ of level $k =2$ on the space of global sections $\Gamma(K,\Omega^\mathrm{ch}_K(E))$ of $\Omega^\mathrm{ch}_K(E)$.
\end{proposition}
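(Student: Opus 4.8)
The plan is to construct the embedding directly from the universal properties recorded in Proposition \ref{prop:CDRE}, realizing the generators of $V^2(\mathfrak{g}_{\text{super}})$ as the fields produced by the map $j$, and then invoking the freeness of the superaffine vertex algebra. Injectivity will be the crux.

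First I would set up the generator map. Recall that $\mathfrak{g} = \Gamma(E)^K$ is finite-dimensional: a $K$-invariant section is determined by its value at the identity, so $\mathfrak{g} \cong E_e$, and its bracket and pairing are those induced from the Dorfman bracket and the pairing $\langle\cdot,\cdot\rangle$. For $a \in \mathfrak{g}$ I would set $a \mapsto j(a)$ and $\Pi a \mapsto j(\Pi a)$; since $a$ is globally defined, both images lie in $\Gamma(K,\Omega^{\mathrm{ch}}_K(E))$. I then extend this $\mathbb{C}$-linearly over $\mathfrak{g} \oplus \Pi\mathfrak{g}$.

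Second, I would verify that this assignment respects the defining data of the universal superaffine vertex algebra of Example \ref{example:superfaffine}. The $\lambda$-bracket relations are exactly property (3) of Proposition \ref{prop:CDRE}: since the pairing of two invariant sections is a $K$-invariant function, hence the constant $\langle a,b\rangle = (a\vert b)$, the identity $[j(a)_\lambda j(b)] = j([a,b]) + 2\lambda\,\iota(\langle a,b\rangle)$ reproduces $[a_\lambda b] = [a,b] + \lambda (a\vert b)k$ precisely when $k=2$; likewise $[j(A)_\lambda j(\Pi B)] = j(\Pi[A,B])$ and $[j(\Pi A)_\lambda j(\Pi B)] = 2\iota(\langle A,B\rangle)$ match the remaining two brackets using the symmetry of the pairing. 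The level is thus \emph{forced} to be $k=2$ by the factor $2\lambda$ appearing in Proposition \ref{prop:CDRE}. For the odd derivation I would use the compatibility of $j$ with $S$, which gives $S\,j(\Pi a) = j(a)$ and hence $S\,j(a) = S^2 j(\Pi a) = T\,j(\Pi a)$, matching $Sa = T\,\Pi a$ and $S\,\Pi a = a$. Since $V^2(\mathfrak{g}_{\text{super}})$ is freely generated by $\mathfrak{g} \oplus \Pi\mathfrak{g}$ subject only to these relations, its universal property yields a unique homomorphism of SUSY vertex algebras $\Phi \colon V^2(\mathfrak{g}_{\text{super}}) \to \Gamma(K,\Omega^{\mathrm{ch}}_K(E))$ extending the generator map, with image landing in the global sections because $\Phi$ is assembled from $T$, the vacuum, normally ordered products and $\lambda$-brackets of globally defined fields.

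The main obstacle is the injectivity of $\Phi$. Here I would exploit that $K$ is parallelizable: a global left-invariant frame trivializes $E \cong K \times (\mathfrak{k} \oplus \mathfrak{k}^*)$ and presents $\Omega^{\mathrm{ch}}_K(E)$ as a free $bc$-$\beta\gamma$ system in which the fields $j(a)$ and $j(\Pi a)$ attached to a basis of $\mathfrak{g}$ appear as the free generators. Combined with the conformal-weight filtration on $V^2(\mathfrak{g}_{\text{super}})$, for which a PBW-type basis is given by normally ordered monomials in the derivatives $T^n a$ and $T^n \Pi a$ of the generators, this shows that the images of distinct monomials stay linearly independent in the global sections, so $\mathrm{gr}\,\Phi$ is injective and hence $\Phi$ is injective. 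I expect the bookkeeping needed to confirm that the superaffine relations are the only relations among the images — that is, that $\Phi$ does not collapse the PBW basis — to be the technical heart of the argument.
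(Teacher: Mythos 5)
Your proposal matches the paper's route: the paper derives this proposition (citing \cite{Alvarez-Consul:2020hbl}, Proposition 4.17) precisely by taking $\mathfrak{g}=\Gamma(E)^K$ with its induced bracket and pairing and applying the universal construction of Proposition \ref{prop:CDRE}, so that property (3) forces level $k=2$ exactly as you observe. Your additional PBW/filtration argument for injectivity is the standard way to complete this, so the proposal is correct and essentially the same proof.
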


\subsection{Embeddings of \texorpdfstring{$\SV_a$}{SVa}}
\label{sec:embeddingsSVa}

For the remainder of the section, we will denote by $M$ either the manifold $S^3\times T^4$ or $S^3\times S^3\times S^1$ and by $\mathfrak{k}$ the corresponding Lie algebra. Let $\lbrace v_i \rbrace$ be a basis of $\mathfrak{k}$ with dual basis $\lbrace v^i\rbrace$. We will denote by $\varphi$ any of the $\mathrm{G}_2$-structures introduced in \Cref{sec:manifolds}, that is: $\varphi^i$ with $i=1,2$ for $S^3\times T^4$ and $i=3,4,5$ for $S^3\times S^3\times S^1$. By direct application of \cite[Proposition 4.5]{Alvarez-Consul:2020hbl} we obtain the following.

\begin{proposition} \label{prop:quadraticLiealgebra}
    Let $M$, $\varphi$ and $\mathfrak{k}$ be as above, and let $H$ be the torsion three-form of $\varphi$ introduced in \Cref{prop:uniqueG2connection}. The vector space $\mathfrak{g}=\mathfrak{k}\oplus\mathfrak{k}^*$ endowed with the bracket
    \begin{equation*}
        [v+\alpha,w+\beta]=[v,w]-\beta([v,])+\alpha([w,])+i_w i_v H \, ,
    \end{equation*}
and pairing
    \begin{equation*}
        (v+\alpha \, \vert \, v+\alpha)=v(\alpha) \, ,
    \end{equation*}
for any $v,w\in\mathfrak{k}$, $\alpha,\beta\in\mathfrak{k}^*$, has the structure of a quadratic Lie algebra.
\end{proposition}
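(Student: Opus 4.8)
The plan is to recognize the bracket and pairing on $\mathfrak{g}=\mathfrak{k}\oplus\mathfrak{k}^*$ as the restriction to left-invariant sections of the $H$-twisted standard Courant algebroid structure on $E=TM\oplus T^*M$, and then to invoke \cite[Proposition 4.5]{Alvarez-Consul:2020hbl} directly, whose hypotheses reduce to the closedness and left-invariance of $H$. First I would identify $\mathfrak{k}\oplus\mathfrak{k}^*$ with the space $\Gamma(E)^K$ of left-invariant sections, since a left-invariant vector field (resp. one-form) is determined by its value at the identity. Feeding left-invariant $X=v$, $Y=w$, $\xi=\alpha$, $\eta=\beta$ into the splitting formula for the Dorfman bracket recalled in Proposition~\ref{prop:CDRE},
$$
[X+\xi,Y+\eta]=[X,Y]+L_X\eta-i_Y d\xi+i_Y i_X H\, ,
$$
and using that $\beta(w)$ and $\alpha(w)$ are constant functions, the Cartan formula gives $L_v\beta=-\beta([v,\cdot])$ and $i_w d\alpha=-\alpha([w,\cdot])$. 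Substituting reproduces exactly the bracket in the statement. Likewise the canonical pairing of the Courant algebroid between $\mathfrak{k}$ and $\mathfrak{k}^*$ restricts to the symmetric, non-degenerate pairing $(v+\alpha\,\vert\,v+\alpha)=v(\alpha)$.

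Next I would check the two remaining hypotheses of \cite[Proposition 4.5]{Alvarez-Consul:2020hbl}. The torsion three-form $H$ is left-invariant, being a constant-coefficient combination of wedge products of the left-invariant coframe $\lbrace v^i\rbrace$ (namely $H=\ell\,v^{456}$ on $S^3\times T^4$ and $H=s\,v^{123}+\ell\,v^{456}$ on $S^3\times S^3\times S^1$); hence $E$, with the isotropic splitting induced by $H$, is a left-equivariant exact Courant algebroid. Moreover $H$ is closed, as recorded in each of the propositions of \Cref{sec:manifolds}. With these two facts in hand, \cite[Proposition 4.5]{Alvarez-Consul:2020hbl} yields the quadratic Lie algebra structure at once.

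Should a self-contained verification be desired, I would indicate where each ingredient enters. Antisymmetry is immediate from a direct swap: the $L_X\eta$ and $-i_Y d\xi$ terms exchange roles and pick up a sign, $[X,Y]$ is antisymmetric, and the twist term $i_Y i_X H$ is antisymmetric in $X,Y$. Invariance of the pairing, $([x,y]\,\vert\,z)+(y\,\vert\,[x,z])=0$, follows from the Courant compatibility axiom $\pi(A)\la B,C\ra=\la[A,B],C\ra+\la B,[A,C]\ra$ together with the fact that $\la B,C\ra$ is a constant function for invariant $B,C$, so its anchor-derivative vanishes; the same mechanism converts the Dorfman (Leibniz) identity into the antisymmetric Jacobi identity on invariant sections. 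The decisive point, and the main obstacle, is that the Dorfman bracket satisfies this Leibniz identity precisely when $dH=0$: the twist term contributes a term proportional to $i_Z i_Y i_X\, dH$ to the Jacobiator, which vanishes exactly because $H$ is closed. Thus closedness of the torsion three-form is the crucial geometric input, and it is guaranteed for all five $\mathrm{G}_2$-structures by the computations of \Cref{sec:manifolds}.
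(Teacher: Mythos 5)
Your proposal is correct and follows essentially the same route as the paper, which justifies the statement by direct application of \cite[Proposition 4.5]{Alvarez-Consul:2020hbl} to the left-equivariant exact Courant algebroid with closed, left-invariant twist $H$, identifying $\mathfrak{k}\oplus\mathfrak{k}^*$ with the invariant sections and the bracket with the restricted Dorfman bracket. Your additional self-contained verification (antisymmetry, invariance of the pairing via constancy of $\langle B,C\rangle$ on invariant sections, and the Jacobiator reducing to $i_Zi_Yi_X\,\dd H=0$) is a correct and welcome elaboration of what that citation encapsulates.
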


For each choice of $M$ and $\varphi$ we now present explicit embeddings of the $\SV_a$ algebra in the corresponding superaffine vertex algebra $V^k(\mathfrak{g}_{\text{super}})$, for arbitrary level $0\neq k\in\mathbb{C}$. When $k=2$, applying Proposition \ref{prop:superaffineembed}, we obtain induced embeddings in the global sections of the chiral de Rham complex
$$
\SV_a \hookrightarrow  V^2(\mathfrak{g}_{\text{super}}) \hookrightarrow \Gamma(M,\Omega^\mathrm{ch}_M(E))\,.
$$
To the best of our knowledge, these constitute the first explicit embeddings of the $\SV_0$ algebra when the underlying $\mathrm{G}_2$-structure has torsion, and the first ever explicit embeddings of $\SV_a$ for $a\neq 0$.

Following \cite{RodriguezDiaz:2016tih}, we define the following odd vectors of $V^k(\mathfrak{g}_{\text{super}})$
\begin{equation*}
    e^i=\frac{1}{\sqrt{2}}\Pi\left( g^{ij} v_j+v^i \right) \, .
\end{equation*} 

\begin{theorem}
\label{thm:finalresult}
    Let $M$, $\varphi$ be as above, let $\mathfrak{g}$ be as in \Cref{prop:quadraticLiealgebra} and let $0\neq k\in\mathbb{C}$. 
    Then, the following sections of  $V^k(\mathfrak{g}_{\text{super}})$
    \begin{equation*}
        \Phi=\frac{1}{3k}\sqrt{\frac{2}{k}}\, \varphi_{ijk}\normord{e^i\normord{e^je^k}} \, , \qquad K=S\Phi\,, 
    \end{equation*}
    generate an embedding of the $\SV_a$ algebra in $V^k(\mathfrak{g}_{\text{super}})$ for the choice of parameter
    \begin{equation*}
        a=-\frac{1}{\sqrt{k}}\frac{7}{6}\tau_0 \, .
    \end{equation*}
    The remaining fields are easily recovered via
    \begin{equation*}
        X=\frac{1}{6}\Phi_{(0)}\Phi\,,  \qquad M=S X\,, \qquad G=-\frac{1}{3}\Phi_{(1)}K -2\frac{1}{\sqrt{k}}\frac{7}{6}\tau_0\,\Phi\,, \qquad L=\frac{1}{2}S G\,.
    \end{equation*}
\end{theorem}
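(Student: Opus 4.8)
The plan is to verify directly that the assignment of generators in the statement realizes the entire defining structure of $\SV_a$ inside the free-field algebra $V^k(\mathfrak{g}_{\text{super}})$. Everything is built from the odd fermionic generators $e^i$, which by the pairing on $\mathfrak{g}$ and the level-$k$ brackets of Example~\ref{example:superfaffine} satisfy the Clifford-type relation
$$[e^i{}_\lambda e^j] = \tfrac{k}{2}\, g^{ij},$$
together with their $S$-images $Se^i = \tfrac{1}{\sqrt 2}(g^{ij}v_j + v^i)$, which generate the even affine currents. All of the $\lambda$-brackets below are then computed by repeated application of the non-commutative Wick formula and the sesquilinearity and quasi-symmetry axioms of \cite{Heluani:2006pk}, reducing each bracket of normally ordered fermion strings to contractions weighted by the components $\varphi_{ijk}$ of the associative form and $\psi_{ijkl}$ of the coassociative form.

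First I would record the $\mathrm{G}_2$ contraction identities that drive the computation, in particular $\varphi_{ijk}\varphi^{ijk}=42$, the quadratic identity $\varphi_{ijp}\varphi_{kl}{}^p = g_{ik}g_{jl}-g_{il}g_{jk}+\psi_{ijkl}$, and the further contractions of $\varphi$ against $\psi$, since these fix the numerical coefficients such as the $-\tfrac72\lambda^2$ term and the central charge $\tfrac{21}{2}$. With these in hand, computing $[\Phi{}_\lambda\Phi]$ yields $-\tfrac72\lambda^2 + 6X$, which simultaneously confirms the normalization of $\Phi$ and defines $X=\tfrac16\Phi_{(0)}\Phi$ as a four-fermion field built from $\psi$. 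The remaining generators $K=S\Phi$, $M=SX$, $G$ and $L=\tfrac12 SG$ are then produced by the stated formulas, after which one checks that $(L,G)$ close into a Neveu--Schwarz algebra of central charge $\tfrac{21}{2}+3a^2$.

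The crucial new feature, absent in the torsion-free case of \cite{RodriguezDiaz:2016tih}, is that the Lie bracket of $\mathfrak{g}$ no longer annihilates $\varphi_{ijk}$: the structure constants act on $\varphi$ exactly through the torsion, via the decomposition $\dd\varphi = \tau_0\,\psi + 3\tau_1\wedge\varphi + *\tau_3$ of \Cref{sec:G2structures}. I would therefore isolate, bracket by bracket, the terms in which a structure constant contracts an index of $\varphi$ and rewrite them using this structure equation. The anomalous coefficient $a$ appears precisely here: the term $a\tfrac{\lambda^2}{2}$ in $[G{}_\lambda\Phi]=a\tfrac{\lambda^2}{2}+K$ originates from the $\tau_0\,\psi$ piece of $\dd\varphi$, and a parallel computation fixes the torsion correction $-2\tfrac{1}{\sqrt k}\tfrac76\tau_0\,\Phi$ in the definition of $G$. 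Tracking these contributions through all brackets forces $a=-\tfrac{1}{\sqrt k}\tfrac76\tau_0$ and shows that $\tau_3$ drops out, while $\tau_1$, always closed in our examples, enters only as a shift analogous to the Lee form (cf.\ \Cref{rem:tau1enG}).

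The main obstacle is the final consistency check, namely verifying the singular-vector relation \eqref{eq:singularvector}, since $\SV_a$ is the quotient of the $W$-algebra $\mathcal{SW}(\tfrac32,\tfrac32,2)$ by the ideal it generates (Remark~\ref{rem:SW}). This identity couples $T^2\Phi$, $\normord{\Phi X}$, $\normord{GX}$, $\normord{\Phi K}$, $TM$ and $T^2G$ with the precise coefficient $a$, so it can hold only once the torsion contributions have been correctly assembled; it is the step where the $\mathrm{G}_2$ quartic identities, from contractions of two copies of $\psi$, and the structure equations must conspire. In practice this is an extensive but mechanical computation, carried out with the aid of the \emph{OPEdefs} package, and it is here that the specific value $a=-\tfrac{1}{\sqrt k}\tfrac76\tau_0$ is ultimately forced.
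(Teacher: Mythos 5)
Your proposal is correct and follows essentially the same route as the paper: a direct (computer-assisted, via \emph{OPEdefs}) verification that the stated sections satisfy all the $\lambda$-brackets of $\SV_a$ together with the relation \eqref{eq:singularvector}, with the paper's proof being exactly this one-paragraph appeal to brute-force computation. Your additional commentary on the $\mathrm{G}_2$ contraction identities and on how the torsion classes enter (only $\tau_0$ fixing $a$, $\tau_1$ as a Lee-form-type shift, $\tau_3$ dropping out) is consistent with the paper's remarks and merely makes explicit what the software computation encodes.
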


\begin{proof}
    The proof is a straightforward but long verification that the sections defined above satisfy the $\lambda$-brackets introduced in Section \ref{sec:deformedSV}. The condition \eqref{eq:singularvector} is directly satisfied for this embedding. The computation is more easily performed with the assistance of computer software: we have used the \emph{OPEdefs} \emph{Mathematica} package by Thielemans \cite{Thielemans:1994er}.
\end{proof}

\begin{remark}
Our results shall be compared with the embedding of $\SV_0$ in the chiral de Rham complex of a $\mathrm{G}_2$-holonomy manifold found by Rodr\'iguez D\'iaz \cite{RodriguezDiaz:2016tih}, which uses a local expression for the generator $\Phi$ in normal coordinates with an explicit dependence of the Christoffel symbols of the Levi-Civita connection. The simpler expression for $\Phi$ in our main result is due to the fact that the connection with skew torsion introduced in \Cref{prop:uniqueG2connection} has vanishing Christoffel symbols in the given parallel frames.
\end{remark}

\begin{remark}
\label{rem:tau1enG}
The embedding constructed in Theorem \ref{thm:finalresult} is equivalently determined by the sections $\Phi$ and $G$, where the latter can be defined independently of $K$ as
    \begin{equation*}
        G=G_0+\frac{4\sqrt{2}}{k} T(\tau_{1\,i}\,e^i) \, ,
    \end{equation*}
    where
    \begin{equation*}
        G_0=\frac{1}{k}\left( 2 g_{ij} \normord{(Se^i) e^j}+\frac{1}{3k} 4 
g_{ij}g_{mn}\normord{e^i\normord{e^m \left[e^n,e^j \right] }} \right) \,.
\end{equation*}
Note that the expression for $G_0$ only depends on the metric $g$ and the closed torsion three-form $H$, while $G$ receives a \emph{dilaton correction} from the Lee form $\tau_1$, as found in \cite{Alvarez-Consul:2020hbl,Alvarez-Consul:2023zon}. By \cite[Theorem 4.9]{Silva:2024fvl}, this fulfills our general expectation that a generalized Ricci flat metric with closed divergence should induce an embedding of the $\mathcal{N}=1$ vertex algebra in the global sections of the chiral de Rham complex. Similarly as above, the generator of supersymmetry $G$ should receive a dilaton correction proportional to $T$ acting on the divergence. Note also that $G_0$ is formally similar to the $\mathcal{N}=1$ generator in the Kac--Todorov construction \cite{Heluani:2006pk}, although our formula differs from the standard one, since we are not using a pair of dual basis.
\end{remark}

\end{document}